\theoremstyle{plain}
\newtheorem{FactCounter}{dummy}[section]
\newtheorem{Theorem}[FactCounter]{Theorem} %
\newtheorem{Proposition}[FactCounter]{Proposition} %
\newtheorem{Lemma}[FactCounter]{Lemma} %
\newtheorem{Corollary}[FactCounter]{Corollary} %
\theoremstyle{definition}
\newtheorem{Definition}[FactCounter]{Definition} %
\theoremstyle{remark}
\newtheorem*{Remark}{Remark} %
\numberwithin{equation}{section}
\newcommand\N{\mathbb{N}}
\newcommand\Z{\mathbb{Z}}
\newcommand\F{\mathbb{F}}
\let\temp\phi\let\phi\varphi\let \varphi\temp 
\let\temp\theta\let\theta\vartheta\let \vartheta\temp 
\newcommand\operator[1]{\mathop{\operatorname{#1}}\nolimits}
\newcommand{\Hom}{\operator{Hom}}
\title{Quantum Segre maps via cocycle twists}
\author{Yuri Bazlov and Runyang Chen}
\date{}
\begin{document}
\setcounter{tocdepth}{1}
\maketitle

\begin{abstract}\noindent
A well-known noncommutative deformation $\mathcal A^N_{\mathbf{q}}$ of the polynomial algebra $\mathcal A^N$ can be obtained as a twist of $\mathcal A^N$ by a cocycle on the grading semigroup. 
Of particular interest to us is an interpretation of 
$A^N_{\mathbf{q}}$ as a quantum projective space. 
We outline a general method of cocycle twist quantization
of tensor products and morphisms between algebras graded by monoids
and use it to construct deformations of the classical Segre embeddings of projective spaces.
The noncommutative Segre maps $s_{n,m}$, proposed by 
Arici, Galuppi and Gateva-Ivanova, arise as a particular case of our construction which corresponds to factorizable cocycles in the sense of Yamazaki.
\end{abstract}

\tableofcontents
	
\section{Introduction}

Very generally speaking, 
quantization of an object $X$ refers to some systematical way of replacing the commutative ring of functions $\mathcal O(X)$
by a noncommutative ring $\mathcal O_q(X)$ which depends on a parameter $q$, where one expects to recover the ``classical'' $\mathcal O(X)$
by taking a limit as $q\to 1$. The process may involve quantizing functions from $X$ to $Y$, symmetries or representations of $X$ and so on.
One of the constructions which set the stage for modern quantum algebra is Manin's quantum space $\mathcal O_q(\F^n)$ acted upon by the  ``quantum $\mathrm{GL}_n(\F)$'' \cite{Manin_Koszul}, see also \cite{Manin_CRM}.
The quantization of the affine $n$-space can in fact depend on $\binom n2$ deformation parameters, assembled into a multiplicatively 
antisymmetric $n\times n$ deformation matrix $\mathbf q$.
\medbreak

\noindent Writing the $\mathbf q$-deformation of the polynomial 
ring $\mathcal A^N=\F[x_0,\dots,x_N]$ as $\mathcal A_{\mathbf q}^N$, we view this graded algebra as a quantized homogeneous coordinate ring of a projective space. The algebra $\mathcal A_{\mathbf q}^N$ has many properties expected of a ``quantum $\mathbb P^N$'', surveyed for example in 
\cite{Stafford_vandenBergh} and \cite{Shelton_Tingey}; in particular, it is Artin-Schelter regular and has the correct Hilbert series.
We mention here that $\mathcal A^N_{\mathbf q}$ serves as 
a basis for further quantizations of the projective space in  \cite{MPS}.
\medbreak

\noindent Recently, in \cite{AGGI} Arici, Galuppi and Gateva-Ivanova constructed  a quantization of the Segre embedding 
$\mathbb P^n \times \mathbb P^m \hookrightarrow \mathbb P^N$
where $N = (n+1)(m+1)-1$.  
This quantization is given explicitly as an algebra homomorphism 
$s_{n,m}\colon \mathcal A^N_{\mathbf g} \to \mathcal A^n_{\mathbf q}
\otimes \mathcal A^m_{\mathbf q'}$ where the $\mathbf q$, $\mathbf q'$ are arbitrary deformation matrices and $\mathbf g$ is uniquely determined by $\mathbf q$ and $\mathbf q'$. Here $\otimes$ is the classical tensor product of algebras (where the two factors commute with each other).
\medbreak

\noindent In \cite{ArtinSchelterTate} Artin, Schelter and Tate hold $\mathcal A^N_{\mathbf q}$ as an example of a deformation by a cocycle twist, and show how this leads to a cocycle twist construction 
of the multiparameter quantum $\mathrm{GL}_n$. 
Cocycle deformations of algebras graded by abelian (semi)groups originate from physics and have been used as a method of quantization in various algebraic settings. The twist construction applies more generally to algebras where grading is replaced with a (co)action of a bialgebra, see Majid \cite[\S2.3]{Majid_foundations} and \cite{GiaquintoZhang}. Under some assumptions on the twisting
bialgebra, cocycle twist quantization has been shown to preserve various algebraic properties, for example, being semisimple \cite{torsors} and being Koszul \cite{Jones-Healey}.
\medbreak

\noindent In the present paper, we extend the cocycle twist to morphisms between graded algebras which are compatible with a given morphism between grading monoids. This allows us 
to quantize the classical Segre map, twisting it by a cocycle 
on the monoid $\N^{n+1}\times \N^{m+1}$. 
Up to cohomology, such cocycles are parameterized by triples $(\mathbf q, \mathbf q', \alpha)$ where $\alpha$, represented by a matrix of size 
$(n+1)\times (m+1)$ over $\F^\times$, is a pairing between $\N^{n+1}$ and $\N^{m+1}$, used to deform the tensor product
of the two quantum projective spaces.
\medbreak
 
\noindent We thus obtain noncommutative Segre maps of the form
$$
\mathcal A^N_{\mathbf g} \to \mathcal A^n_{\mathbf q}\otimes_\alpha 
\mathcal A^m_{\mathbf q'},
$$
where $\otimes_\alpha$ is the operation of twisted tensor product of two graded algebras. 
The deformation matrix $\mathbf g$ depends on $\mathbf q$, $\mathbf q'$ and $\alpha$.
There are two extreme cases of this construction:
\begin{itemize}
	\item[(a)] All entries of the matrix $\alpha$ are $1$.
	The resulting maps embed a ``classical product of two quantum projective spaces'' 
	in a quantum projective space, and are exactly the Segre map 
	constructed by Arici, Galuppi and Gateva-Ivanova in \cite{AGGI}.
	\item[(b)] All entries of $\mathbf q$ and $\mathbf q'$ are $1$. 
	We have a new construction realising a ``quantum product of two classical projective spaces'' as a subvariety of a quantum projective space. 
\end{itemize}
\noindent In the present paper, we do not investigate the kernel of 
our general noncommutative Segre map (as done in \cite{AGGI} in case (a)), leaving this to future work.
\medbreak

\subsection*{Organization of the paper}
 
\noindent Twisting an associative $\F$-algebra $A$ graded by a monoid $S$
requires a $2$-cocycle on $S$ with values in the multiplicative group $\F^\times$ of the field $\F$.
In Section~\ref{sect:cohomology}
we collect the facts about $2$-cocycles on monoids that we need.
The machinery used to describe the second cohomology of groups comes from 
Yamazaki \cite{Yamazaki} and Artin, Schelter and Tate \cite{ArtinSchelterTate}. We need a version of this for monoids, so 
we carefully derive the results in a way which avoids taking inverses and group extensions to show that 
$H^2(\N^a, \F^\times)\cong H^2(\Z^a, \F^\times)$ and establish the Yamazaki factorization 
\begin{equation*}
	H^2(\N^a\times \N^b, \F^\times) \cong 
	H^2(\N^a, \F^\times) \times
	H^2(\N^b, \F^\times) \times
	P(\N^a, \N^b, \F^\times),
\end{equation*}
where the last factor is the group of bimultiplicative 
$\F^\times$-valued pairings between the monoids $\N^a$ and $\N^b$.\medbreak

\noindent Section \ref{sect:twists} introduces the general technique of 
twisting an associative algebra by a cocycle on the grading monoid. 
We achieve two principal goals: twisting of a morphism between 
two algebras graded by different monoids (subject to a compatibility condition), and constructing a twisted tensor product of two graded algebras. The final section constructs the quantized Segre maps as an  application of the above technique.

\section{The second cohomology of the free commutative monoid $\mathbb{N}^n$}
\label{sect:cohomology}

\subsection{The second cohomology of a monoid}
Let $S$ be a multiplicative 
monoid with identity $e$.
We let $\Gamma$ be an abelian group, written multiplicatively, 
where $\frac xy$ means $xy^{-1}$, and the identity is denoted by $1$.
The main application later on will be to the case where $\Gamma=\F^\times$ is the multiplicative group of a field.
The action of $S$ on $\Gamma$ will be trivial throughout.

\begin{Definition}
A \textbf{2-cocycle,} or simply a \textbf{cocycle} on a
monoid
$S$ with values in an abelian group $\Gamma$ is a map $\mu\colon S\times S\to \Gamma$ such that
\begin{equation}\label{eq:cocycle}
			\mu(x,yz)\mu(y,z)=\mu(x,y)\mu(xy,z), \quad \mu(x,e)=\mu(e,x)=1
\end{equation}
for all $x,y,z\in S$.
%
For any function $h\colon S \to \Gamma$ with $h(e)=1$, the \textbf{coboundary} $\delta h\colon S \times S \to \Gamma$ of $h$ is
defined by   
$\delta h(x,y) = \dfrac{h(x)h(y)}{h(xy)}$ and is easily seen to be a cocycle.
If $Z^2(S, \Gamma)$ is the abelian group of cocycles under pointwise multiplication, and $B^2(S, \Gamma)$ is the group of coboundaries, 
the \textbf{second cohomology group} of $S$ with coefficients in $\Gamma$ is the factorgroup $H^2(S, \Gamma) = Z^2(S, \Gamma) /B^2(S, \Gamma)$,
whose elements are termed \textbf{cohomology classes.}
Two cocycles are \textbf{cohomologous} if they belong to the same cohomology class.
\end{Definition}
\noindent If $S$ and $T$ are monoids, 
we identify $S$ with the submonoid $S \times \{e_T\}$ of the direct product $S\times T$ via $s \mapsto (s, e_T)$, and likewise $T$ is identified with $\{e_S\}\times T$.
Thus, every cocycle $\mu$ on $S\times T$ restricts to a cocycle on $S$,
written $\mu|_S$. The restriction map
\begin{equation}\label{eq:restriction_on_cocycles}
	Z^2(S\times T, \Gamma) \to Z^2(S, \Gamma) \times Z^2(T, \Gamma),
	\quad
	\mu \mapsto (\mu|_S, \mu|_T), 
\end{equation} 
sends coboundaries to pairs of coboundaries, 
resulting in a well-defined \textbf{restriction homomorphism}
\begin{equation}\label{eq:restriction}
	H^2(S\times T, \Gamma) \to H^2(S, \Gamma) \times H^2(T, \Gamma)
\end{equation} 
on the cohomology. The map \eqref{eq:restriction_on_cocycles}, hence also the map \eqref{eq:restriction}, is surjective: one possible preimage of a pair $(\nu, \xi)$ of cocycles is $\nu\times\xi$ 
defined as follows.  
\begin{Definition}
The \textbf{direct product} of cocycles $\nu\in Z^2(S, \Gamma)$ and 
$\xi\in Z^2(T, \Gamma)$ is the function 
$$
\nu \times \xi \colon (S \times T) \times (S \times T) \to \Gamma,
\quad(\nu\times  \xi) ((s,t), (s',t')) = \nu(s,s')\xi(t,t'),
$$
easily seen to be a cocycle.
A cocycle $\mu \in Z^2(S\times T, \Gamma)$ is \textbf{factorizable}
if $\mu$ is cohomologous to a cocycle of the form $\nu \times \xi$.
\end{Definition}

\subsection{The Yamazaki factorization of a cocycle}

Denote by $P(S, T, \Gamma)$ the set 
of all maps $\alpha\colon S \times T \to \Gamma$ which are 
\textbf{bimultiplicative pairings,} that is,
\begin{equation}\label{eq:pairing}
	\alpha(ss',t)=\alpha(s,t)\alpha(s',t), \quad 
	\alpha(s,tt')=\alpha(s,t)\alpha(s,t')
\end{equation}
for all $s,s'\in S$ and $t,t'\in T$.
Clearly, $P(S, T, \Gamma)$ is a group under pointwise multiplication.
\begin{Proposition}\label{prop:yamazaki_cocycles}
View $S$ and $T$ as submonoids of $S\times T$.
The \textbf{Yamazaki factorization map}
\begin{equation}\label{eq:yamazaki}
	Y\colon Z^2(S\times T, \Gamma) \to  
	Z^2(S, \Gamma) \times Z^2(T, \Gamma) \times P(S, T, \Gamma), 
	\quad 
	Y(\mu) = (\mu|_S, \mu|_T, \alpha_\mu)
\end{equation}
where $\alpha_\mu(s,t) = \dfrac{\mu(s,t)}{\mu(t,s)}$
for $s\in S$ and $t\in T$, is a surjective homomorphism of groups.
\end{Proposition}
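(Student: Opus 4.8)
The plan is to verify three things: that $Y$ is well defined (that is, that $\alpha_\mu$ genuinely lies in $P(S,T,\Gamma)$), that $Y$ is a group homomorphism, and that $Y$ is surjective. The homomorphism property is immediate: restriction of cocycles is multiplicative, so $\mu \mapsto (\mu|_S,\mu|_T)$ is a homomorphism, and the computation $\alpha_{\mu\mu'}(s,t) = \frac{(\mu\mu')(s,t)}{(\mu\mu')(t,s)} = \alpha_\mu(s,t)\,\alpha_{\mu'}(s,t)$ shows that the third component is one as well. The normalization conditions $\alpha_\mu(e_S,t)=\alpha_\mu(s,e_T)=1$ follow directly from $\mu(x,e)=\mu(e,x)=1$.

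The heart of the matter, and the only place where I expect real work, is showing that $\alpha_\mu$ is bimultiplicative. The key observation is that, under the identification of $S$ and $T$ with submonoids of $S\times T$, any $s\in S$ and $t\in T$ commute, since $(s,e_T)(e_S,t)=(s,t)=(e_S,t)(s,e_T)$. To prove $\alpha_\mu(ss',t)=\alpha_\mu(s,t)\,\alpha_\mu(s',t)$ for $s,s'\in S$ and $t\in T$, I would expand both $\mu(ss',t)$ and $\mu(t,ss')$ by means of the cocycle identity \eqref{eq:cocycle}, applying it once to each and then a third time to the mixed term $\mu(s,s't)$, throughout rewriting products such as $s't$ as $ts'$ via the commutation above. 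After the common factors $\mu(s,s')$ and $\mu(st,s')$ cancel, the ratio $\mu(ss',t)/\mu(t,ss')$ collapses exactly to $\frac{\mu(s,t)\,\mu(s',t)}{\mu(t,s)\,\mu(t,s')}=\alpha_\mu(s,t)\,\alpha_\mu(s',t)$. Multiplicativity in the second argument then follows by the symmetric argument: interchanging $S$ and $T$ replaces $\alpha_\mu$ by $\alpha_\mu^{-1}$, so the identity just proved yields $\alpha_\mu(s,tt')=\alpha_\mu(s,t)\,\alpha_\mu(s,t')$.

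For surjectivity I would exhibit, for each triple $(\nu,\xi,\alpha)$, a preimage as a product of two explicit cocycles. First, the direct product $\nu\times\xi$ restricts to $(\nu,\xi)$ and has trivial pairing, because evaluating it on $(s,e_T)$ and $(e_S,t)$ gives $\nu(s,e_S)\,\xi(e_T,t)=1$ and likewise with the arguments reversed; hence $Y(\nu\times\xi)=(\nu,\xi,1)$. Second, to realise the pairing I would set $\beta\big((s,t),(s',t')\big)=\alpha(s,t')$; bimultiplicativity of $\alpha$ makes $\beta$ satisfy both the cocycle identity and the normalization, while direct evaluation gives $\beta|_S=1$, $\beta|_T=1$ and $\alpha_\beta=\alpha$, so that $Y(\beta)=(1,1,\alpha)$. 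Since $Y$ is a homomorphism, $Y\big((\nu\times\xi)\cdot\beta\big)=(\nu,\xi,\alpha)$, and surjectivity follows. The main obstacle throughout is thus purely the bookkeeping in the bimultiplicativity computation; once the commutation $st=ts$ is used systematically, the cocycle identity does all the work.
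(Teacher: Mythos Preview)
Your proposal is correct and follows essentially the same route as the paper: the bimultiplicativity of $\alpha_\mu$ is established by repeated use of the cocycle equation together with the commutation $st=ts$ in $S\times T$, and surjectivity is shown via the direct product $\nu\times\xi$ for the first two components and the cocycle $\sigma\big((s,t),(s',t')\big)=\alpha(s,t')$ for the third. The only differences are cosmetic (the paper multiplies numerator and denominator by $\mu(s,s')$ before applying the cocycle identity, whereas you apply it directly and cancel afterwards), and your treatment of the second variable via the observation that swapping $S$ and $T$ inverts $\alpha_\mu$ is a nice shortcut where the paper simply says ``similar''.
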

\begin{proof}
To check that $\alpha=\alpha_\mu$ is bimultiplicative, write
$\alpha(ss', t) = \mu(ss',t)/\mu(t,ss')$ as  
$$
\frac{\mu(ss',t)\mu(s,s')}{\mu(t,ss')\mu(s,s')} = 
\frac{\mu(s,s't)\mu(s',t)}{\mu(ts,s')\mu(t,s)} =
\frac{\mu(s,s't)\mu(s',t)}{\mu(st,s')\mu(t,s)} =
\frac{\mu(s,s't)\mu(s',t)\mu(s,t)}{\mu(s,ts')\mu(t,s')\mu(t,s)},
$$
by the cocycle equation and the fact that $st=ts$ in $S\times T$. 
But also $ts'=s't$, so after cancellation we get 
$\alpha(s,t)\alpha(s',t)$, proving that $\alpha$ is multiplicative in the left argument. Multiplicativity in the right argument is similar.\medbreak

\noindent It is now clear that $Y$ is a homomorphism. 
Its image contains $Z^2(S,\Gamma)\times Z^2(T,\Gamma) \times \{1\}$ by surjectivity of \eqref{eq:restriction_on_cocycles}. The image of $Y$ also contains $(1,1,\alpha)$ for all $\alpha\in 
P(S, T, \Gamma)$: indeed, define
\begin{equation}\label{eq:cocycle-sigma}
\sigma((s,t), (s',t')) = \alpha(s, t'),	
\end{equation}
and observe that $\sigma$ is bimultiplicative and as such, is easily seen to satisfy cocycle equation \eqref{eq:cocycle}. We have $Y(\sigma) = (1,1,\alpha)$. Surjectivity of $Y$ follows.
\end{proof}
\noindent The map \eqref{eq:yamazaki} was introduced, and the next result proved, in \cite[\S2]{Yamazaki} under the assumption that $S$ and $T$ are groups. See also \cite[Theorem 2.3]{KarpilovskyContinuation}. Here is the monoid version of the Yamazaki theorem \cite[Theoem 2.1]{Yamazaki}:
\begin{Theorem}\label{thm:yamazaki}
Let $S$ and $T$ be monoids and $\Gamma$ be an abelian group.
The Yamazaki factorization map \eqref{eq:yamazaki} induces a bijective homomorphism
\begin{equation}\label{eq:yamazaki_cohomology}
	H^2(S\times T, \Gamma) \xrightarrow{\sim}
	H^2(S, \Gamma)\times H^2(T, \Gamma) \times P(S, T, \Gamma).
\end{equation}	
\end{Theorem}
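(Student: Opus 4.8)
The plan is to upgrade the surjective homomorphism $Y$ of Proposition~\ref{prop:yamazaki_cocycles} to the level of cohomology. First I would check that $Y$ descends to the quotients. A coboundary $\delta h$ restricts to $\delta(h|_S)$ on $S$ and to $\delta(h|_T)$ on $T$, so the first two components of $Y(\delta h)$ are again coboundaries; moreover, identifying $s$ with $(s,e_T)$ and $t$ with $(e_S,t)$, commutativity $st=ts$ in $S\times T$ gives $\delta h(s,t) = \delta h(t,s)$, whence $\alpha_{\delta h}=1$. Thus $Y$ carries $B^2(S\times T,\Gamma)$ into $B^2(S,\Gamma)\times B^2(T,\Gamma)\times\{1\}$ and induces a homomorphism $\bar Y$ as in \eqref{eq:yamazaki_cohomology}. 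Surjectivity of $\bar Y$ is immediate from the surjectivity of $Y$ in Proposition~\ref{prop:yamazaki_cocycles}, so the theorem reduces to the injectivity of $\bar Y$.

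For injectivity, suppose $\bar Y([\mu]) = (1,1,1)$, that is, $\mu|_S = \delta f$ and $\mu|_T = \delta g$ for some normalized functions $f\colon S\to\Gamma$ and $g\colon T\to\Gamma$, and $\alpha_\mu = 1$. Replacing $\mu$ by $\mu/\delta h$ with $h(s,t) = f(s)g(t)$ alters neither the cohomology class of $\mu$ nor, by the computation above, the value of $\alpha_\mu$; since $h|_S = f$ and $h|_T = g$, this reduces us to the case $\mu|_S = 1$, $\mu|_T = 1$ and $\alpha_\mu = 1$. It therefore suffices to prove the following claim: any cocycle $\mu$ enjoying these three properties is itself a coboundary.

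To prove the claim I introduce the cross term $c(s,t) = \mu((s,e_T),(e_S,t))$, which is normalized and, since $\alpha_\mu=1$, satisfies $\mu((e_S,t),(s,e_T)) = c(s,t)$ as well. The heart of the argument is to compute $\mu$ on arbitrary arguments in terms of $c$. Writing $(s,t) = st$ in $S\times T$ and applying the cocycle equation \eqref{eq:cocycle} repeatedly — first to evaluate $\mu$ of a pure element against a general element, then to combine these into the value on two general elements, each time using $st=ts$ to commute pure-$S$ factors past pure-$T$ factors and using $\mu|_S=\mu|_T=1$ to discard the pure terms that appear — I expect to reach the closed formula
\[
\mu\bigl((s,t),(s',t')\bigr) = \frac{c(ss',tt')}{c(s,t)\,c(s',t')}.
\]
The right-hand side is precisely $\delta(c^{-1})\bigl((s,t),(s',t')\bigr)$, so $\mu$ is the coboundary of $c^{-1}$; this establishes the claim and hence the injectivity of $\bar Y$.

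The main obstacle is this last computation: the formula for $\mu$ must be extracted from \eqref{eq:cocycle} alone, through a short sequence of intermediate identities. The delicate point, and the reason the monoid version requires its own argument rather than a citation to \cite{Yamazaki}, is that this entire chain of manipulations has to be carried out without ever inverting an element of $S$ or $T$; only the cocycle relation, the normalizations $\mu|_S=\mu|_T=1$ and $\alpha_\mu=1$, and commutativity in $S\times T$ are available, with inverses taken solely in the coefficient group $\Gamma$.
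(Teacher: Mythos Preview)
Your proposal is correct and follows essentially the same route as the paper: the reduction to $\mu|_S=\mu|_T=1$, $\alpha_\mu=1$ is identical (your $\delta h$ with $h(s,t)=f(s)g(t)$ is the paper's $\delta f\times\delta g$), your cross term $c$ is the reciprocal of the paper's function $h$, and the closed formula you target is exactly what the paper establishes. The only difference is that the paper writes out the short chain of cocycle identities explicitly, whereas you describe the strategy and state the expected outcome; the manipulations you anticipate are precisely the ones used.
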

\begin{proof}
If $\mu$ is a coboundary on $S\times T$, then $\mu|_S$
and $\mu|_T$ are coboundaries, and $\mu$ is symmetric in its arguments so $\alpha_\mu=1$. This shows that \eqref{eq:yamazaki_cohomology} is a well-defined group homomorphism. The map is surjective by Proposition \ref{prop:yamazaki_cocycles}. \medbreak

\noindent To prove injectivity, we assume that a cocycle 
	$\mu$ on $S\times T$ restricts to coboundaries 
	$\delta f$ and $\delta g$ on $S$ and $T$, respectively,
	and $\alpha_\mu=1$. We need to show that $\mu$ is a coboundary. 
Dividing $\mu$ by
$\delta f\times \delta g$, which is a coboundary on $S\times T$, 
we may assume that $\mu|_S=1$ and $\mu|_T=1$.\medbreak
	
\noindent Define the function $h\colon S \times T \to \Gamma$ by $h((s,t)) = 
	1/\mu((s,e_T),(e_S,t))$. Injectivity of \eqref{eq:yamazaki_cohomology} will be proved if we show that $\mu = \delta h$. In the calculation, we write a pair $(s,t)\in S \times T$ as $st$,
	$(s,e_T)$ as $s$ and $(e_S,t)$ as $t$:
$$		\mu(st, s't')  = \frac{ \mu(s, s'tt')  \mu(t,s't') }{ \mu(s,t) } 
		=\frac{\mu(ss', tt') \mu(s,s') \mu(t, s't')}{
			\mu(s,t) \mu(s', tt')}
		= \frac{\mu(ss', tt') \mu(tt', s') \mu(t,t')}{
			\mu(s,t) \mu(s', tt') \mu(s',t')}.
$$			
We have used the cocycle equation and the assumption $\mu(s,s')=1$. 
Yet $\mu(t,t')$ is also $1$, and $\mu(tt', s')$ equals $\mu(s', tt')$ 
because $\alpha_\mu(s',tt')=1$. We obtain $\dfrac{\mu(ss', tt')}{\mu(s,t)\mu(s',t')}$ which is $\delta h((ss', tt'))$, as claimed.
\end{proof}
\noindent We can now obtain a criterion for a cocycle on $S\times T$ to be factorizable.
\begin{Corollary}\label{cor:factorizable}
	A cocycle $\mu\in Z^2(S\times T, \Gamma)$ is factorizable, if, and only if, the third component $\alpha_\mu$ of the Yamazaki factorization $Y(\mu)$ is the trivial pairing $1$.
\end{Corollary}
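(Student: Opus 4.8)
The plan is to deduce this corollary directly from the bijectivity of the induced map \eqref{eq:yamazaki_cohomology} established in Theorem~\ref{thm:yamazaki}. Factorizability is by definition a condition on the cohomology class $[\mu]$, and $\alpha_\mu$ is precisely the third coordinate of the Yamazaki factorization, so the statement is essentially a reformulation of what the theorem already proves.

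For the forward implication I would first record that every direct product cocycle has trivial pairing. Evaluating $\alpha_{\nu\times\xi}(s,t) = (\nu\times\xi)((s,e_T),(e_S,t)) \big/ (\nu\times\xi)((e_S,t),(s,e_T))$ and using the normalizations $\nu(s,e_S)=\nu(e_S,s)=1$ and $\xi(t,e_T)=\xi(e_T,t)=1$ collapses both numerator and denominator to $1$, so $\alpha_{\nu\times\xi}=1$. Since Theorem~\ref{thm:yamazaki} shows the map on cohomology is well defined, the coordinate $\alpha_\mu$ depends only on the class $[\mu]$; hence if $\mu$ is cohomologous to some $\nu\times\xi$, then $\alpha_\mu=\alpha_{\nu\times\xi}=1$.

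For the reverse implication, assume $\alpha_\mu=1$ and set $\nu=\mu|_S$ and $\xi=\mu|_T$. I would then compare the images of $[\mu]$ and $[\nu\times\xi]$ under \eqref{eq:yamazaki_cohomology}. A short restriction computation gives $(\nu\times\xi)|_S=\nu=\mu|_S$ and $(\nu\times\xi)|_T=\xi=\mu|_T$, while the third coordinates are $\alpha_\mu=1$ by hypothesis and $\alpha_{\nu\times\xi}=1$ by the computation above; thus both classes are sent to the same triple. Injectivity of \eqref{eq:yamazaki_cohomology} then forces $[\mu]=[\nu\times\xi]$, which is exactly the statement that $\mu$ is factorizable.

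I do not expect a genuine obstacle, since the real content is already carried by Theorem~\ref{thm:yamazaki}. The only points needing care are the bookkeeping around the embeddings $S,T\hookrightarrow S\times T$, so that $\alpha_\mu$ and the restrictions are evaluated on the correct arguments, and the observation that the third coordinate is a coboundary invariant and therefore transports along cohomology classes — which is precisely the well-definedness half of the theorem.
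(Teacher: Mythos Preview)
Your proposal is correct and follows essentially the same approach as the paper's own proof: both directions rely on the well-definedness and injectivity of the induced map \eqref{eq:yamazaki_cohomology} from Theorem~\ref{thm:yamazaki}, together with the direct computation that $\alpha_{\nu\times\xi}=1$ via the normalizations of $\nu$ and $\xi$. The only cosmetic difference is that for the reverse implication the paper writes $Y(\mu)=Y(\mu|_S\times\mu|_T)$ in one line, whereas you spell out the comparison of all three coordinates explicitly.
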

\begin{proof}
If $\mu$ is factorizable, that is, $\mu$ belongs to the same cohomology
class as $\nu \times \xi$ where $\nu\in Z^2(S, \Gamma)$ and $\xi\in Z^2(T, \Gamma)$, then $\alpha_\mu = \alpha_{\nu \times \xi}$ because $\alpha_\mu$ depends only on the cohomology class of $\mu$. 
If $s\in S$ and $t\in T$, we have $\alpha_{\nu \times \xi}(s, t) = 
\dfrac{ (\nu \times \xi)((s, e_T), (e_S,t)) }
{(\nu \times \xi)((e_S,t), (s, e_T))}$ which is $1$ since $\nu(s, e_S)
	= \nu(e_S, s)=1$ and $\xi(e_T, t)=\xi(t, e_T)=1$.
\medbreak

\noindent Vice versa, if $\alpha_\mu=1$ then $Y(\mu)=Y(\mu|_S \times \mu|_T)$ and so by injectivity of the map \eqref{eq:yamazaki_cohomology} in Theorem \ref{thm:yamazaki}, $\mu$ is cohomologous to $\mu|_S \times \mu|_T$.
\end{proof}

\subsection{Abelian cocycles on a commutative monoid}

We can obtain more information about the 
group $H^2(S, \Gamma)$ in the case where the monoid $S$ is commutative.
%
Define the \textbf{antisymmetrization map} $\beta$ by
\begin{equation*}
	\mu \in Z^2(S, \Gamma) \mapsto \beta_\mu, \quad 
	\beta_\mu(x,y) = \frac{ \mu(x,y) }{ \mu(y,x) }.
\end{equation*}
The same argument as in the proof of Proposition \ref{prop:yamazaki_cocycles} (this time, $st=ts$ for all $s,t\in S$ by commutativity of $S$ and not by the property of a direct product) shows that $\beta$ is a bimultiplicative pairing between $S$ and itself. Moreover, $\beta$ is
\textbf{antisymmetric,} meaning
\begin{equation}\label{eq:antisymmetric}
	\beta_\mu(x,x)=1, \quad \beta_\mu(x,y)\beta_\mu(y,x) = 1.
\end{equation}
\begin{Definition}
If $S$ is an commutative monoid and $\Gamma$ is an abelian group, 
$\mu\in Z^2(S,\Gamma)$ is an \textbf{abelian cocycle} 
if $\mu(x,y)=\mu(y,x)$ for all $x,y\in S$.  \medbreak

\noindent A cohomology class $c$ is abelian if $c$ contains an abelian cocycle. 
Following \cite[\S2.3]{Yamazaki}, we denote the subgroup of \textbf{abelian cohomology classes} in $H^2(S,\Gamma)$ by $H^2_{\text{\rm abel}}(S,\Gamma)$, and the group of \textbf{antisymmetric bimultiplicative pairings} between $S$ and $S$ by 
$P_{\text{\rm a.s.}}(S, \Gamma)$.
\end{Definition}
\noindent A coboundary is clearly an abelian cocycle, hence $c\in H^2_{\text{\rm abel}}(S,\Gamma)$ means that all cocycles in $c$ are abelian. As the antisymmetrization map $\beta$ sends $B^2(S, \Gamma)$ to $\{1\}$, $\beta$ is well defined on $H^2(S, \Gamma)$, and the sequence
\begin{equation}\label{eq:SES}
	H^2_{\text{\rm abel}}(S, \Gamma) \hookrightarrow
	H^2(S, \Gamma) \xrightarrow{\beta} P_{\text{\rm a.s.}}(S, \Gamma)
\end{equation}
is exact in the middle term for any commutative monoid $S$ and abelian group $\Gamma$.	The group $H^2_{\text{\rm abel}}$ is well-behaved with respect to the direct product of monoids:
\begin{Lemma}\label{lem:abelian_direct_product}
If $S$ and $T$ are commutative monoids and $\Gamma$ is an abelian group, the restriction map \eqref{eq:restriction} induces an 
isomorphism
\begin{equation}\label{eq:restriction_abel}
H^2_{\text{\rm abel}}(S\times T, \Gamma) \xrightarrow{\sim}
H^2_{\text{\rm abel}}(S, \Gamma) \times 
H^2_{\text{\rm abel}}(T, \Gamma). 
\end{equation}
\end{Lemma}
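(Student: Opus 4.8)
The plan is to leverage the Yamazaki isomorphism of Theorem~\ref{thm:yamazaki} together with the fact, encoded in the exactness of~\eqref{eq:SES}, that the abelian classes are exactly those on which the antisymmetrisation $\beta$ is trivial. First I would verify that the restriction map~\eqref{eq:restriction} carries abelian classes to abelian classes: if $\mu$ is an abelian cocycle on $S\times T$, its restrictions $\mu|_S$ and $\mu|_T$ are visibly symmetric, hence abelian, so~\eqref{eq:restriction} descends to a well-defined homomorphism~\eqref{eq:restriction_abel} between the abelian subgroups.

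For surjectivity I would use the direct product of cocycles. Given abelian classes represented by $\nu\in Z^2(S,\Gamma)$ and $\xi\in Z^2(T,\Gamma)$, the cocycle $\nu\times\xi$ restricts to the pair $(\nu,\xi)$ by construction, and it is itself abelian whenever $\nu$ and $\xi$ are: the value $(\nu\times\xi)((s,t),(s',t'))=\nu(s,s')\xi(t,t')$ is unchanged under swapping $(s,t)$ with $(s',t')$ precisely because $\nu(s,s')=\nu(s',s)$ and $\xi(t,t')=\xi(t',t)$. Thus every pair of abelian classes lifts to an abelian class, giving surjectivity of~\eqref{eq:restriction_abel}.

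Injectivity is where the Yamazaki factorization does the work. Suppose $c\in H^2_{\text{\rm abel}}(S\times T,\Gamma)$ restricts trivially on both factors, so that its Yamazaki triple has the shape $(1,1,\alpha_c)$. Any representative $\mu$ of $c$ is abelian, hence symmetric, so the third component satisfies $\alpha_c(s,t)=\mu((s,e_T),(e_S,t))/\mu((e_S,t),(s,e_T))=1$ for all $s\in S$ and $t\in T$. Therefore $Y(c)=(1,1,1)$, and the injectivity of~\eqref{eq:yamazaki_cohomology} forces $c=1$. Combined with surjectivity, this proves that~\eqref{eq:restriction_abel} is an isomorphism.

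The step I expect to carry the real content is the observation that an abelian class has trivial third Yamazaki component $\alpha_c$; this is precisely where symmetry of $\mu$ in its two arguments is converted into the vanishing of the pairing that measures the cohomological failure of $S$ and $T$ to commute. Everything else is formal manipulation of the isomorphism already established in Theorem~\ref{thm:yamazaki}. An alternative route would compute $\beta_c$ on $S\times T$ directly, using bimultiplicativity to reduce it to its values on $S\times S$, on $T\times T$, and on the mixed pairs $((s,e_T),(e_S,t))$, and then read off that $\beta_c=1$ is equivalent to $\beta_{\mu|_S}=1$, $\beta_{\mu|_T}=1$ and $\alpha_c=1$; but routing through the injectivity of $Y$ is shorter.
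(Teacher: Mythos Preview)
Your proof is correct and follows essentially the same approach as the paper: well-definedness via restricting abelian cocycles, surjectivity via the direct product $\nu\times\xi$, and injectivity by observing that an abelian class has trivial pairing component $\alpha_\mu$ so that the Yamazaki isomorphism of Theorem~\ref{thm:yamazaki} forces the class to be trivial. The paper's proof is terser but the content is identical.
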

\begin{proof}
An abelian cocycle on $S\times T$, when restricted onto $S$ and onto $T$, gives a pair of abelian cocycles, so the map \eqref{eq:restriction_abel} is well defined. 
It is surjective because if
$\nu\in Z^2(S, \Gamma)$ and $\xi\in Z^2(T,\Gamma)$ are both abelian, 
their direct product $\nu\times \xi$ is abelian.
Injectivity follows from Theorem \ref{thm:yamazaki}, taking into account that $\alpha_\mu=1$ if $\mu$ is an abelian cocycle on $S\times T$.
\end{proof}

\subsection{The second cohomology of $\N^n$ and multiplicatively antisymmetric matrices}

Let $\N^n$ denote the free abelian monoid of rank $n$, isomorphic to the direct product of $n$ copies of $\N=\N^1$. In this section, 
we write $\N^n$ multiplicatively, with $g_1,\dots,g_n$ as generators:
\begin{equation*}
	\N^n = \{ g_1^{k_1}g_2^{k_2}\dots g_n^{k_n}: k_1,\dots,k_n\in \Z_{\ge 0}\}.
\end{equation*}
Our aim is to obtain an explicit description of the second cohomology group $H^2(\N^n, \Gamma)$ where $\Gamma$ is an abelian group. We begin with
\begin{Lemma}
$H^2(\N^1,\Gamma)=\{1\}$ and 
$H^2_{\text{\rm abel}}(\N^n,\Gamma)=\{1\}$ for all $n\ge 1$ and for all 
abelian groups $\Gamma$.
\end{Lemma}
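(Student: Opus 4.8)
The plan is to prove the two assertions separately, starting with $H^2(\N^1,\Gamma)=\{1\}$ and then bootstrapping to the abelian statement via the factorization results already established.

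For the first claim, I would show that every cocycle $\mu$ on $\N^1=\{g^k:k\ge 0\}$ is a coboundary. The idea is to construct the trivializing function $h\colon\N^1\to\Gamma$ explicitly by prescribing its values on the generator powers and checking $\mu=\delta h$. Since a cocycle on $\N^1$ is entirely determined by its values $\mu(g^i,g^j)$, I would set $h(e)=1$ and define $h(g^k)$ recursively so as to absorb $\mu$; concretely, one expects $h(g^k)=\prod_{i=1}^{k-1}\mu(g,g^i)$ or a similar telescoping product, and then verify by induction on $j$ (using the cocycle equation \eqref{eq:cocycle} with $x=g^i$, $y=g$, $z=g^{j-1}$ or an analogous splitting) that $\delta h(g^i,g^j)=\mu(g^i,g^j)$. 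The main obstacle here is purely bookkeeping: choosing the normalization of $h$ so that the telescoping cancellation works cleanly, and confirming the induction base and step match the two-variable cocycle identity. No inverses in $S$ are needed since $\N^1$ is a monoid, which is the point of the monoid-level treatment.

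For the second claim, $H^2_{\mathrm{abel}}(\N^n,\Gamma)=\{1\}$, I would argue by induction on $n$ using Lemma \ref{lem:abelian_direct_product}. The base case $n=1$ follows because every cocycle on $\N^1$ is a coboundary (the first claim), so in particular every abelian class is trivial. For the inductive step, write $\N^n\cong \N^{n-1}\times\N^1$; Lemma \ref{lem:abelian_direct_product} gives an isomorphism
\begin{equation*}
H^2_{\mathrm{abel}}(\N^n,\Gamma)\xrightarrow{\sim}H^2_{\mathrm{abel}}(\N^{n-1},\Gamma)\times H^2_{\mathrm{abel}}(\N^1,\Gamma).
\end{equation*}
Both factors on the right vanish by the inductive hypothesis and the base case, so the left-hand group is trivial as well. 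This completely reduces the higher-rank abelian statement to the rank-one computation.

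I expect the rank-one coboundary construction to be the only real work; once it is in place, the direct-product machinery of Lemma \ref{lem:abelian_direct_product} makes the $\N^n$ statement almost formal. The delicate point to watch is that the first assertion is about \emph{all} cohomology classes on $\N^1$ (not merely abelian ones), which is genuinely stronger and is exactly what feeds the base case; one should be careful to state and use it in that full strength rather than only for abelian cocycles.
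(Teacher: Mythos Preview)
Your plan is correct and matches the paper's proof almost exactly: the paper also defines $h$ recursively on powers of the generator (with $h(e)=h(g)=1$ and $h(g^{p+1})=h(g^p)/\mu(g,g^p)$, so your telescoping product needs an inverse) and proves $\mu=\delta h$ by induction on the first exponent via the cocycle equation, then deduces the abelian statement for $\N^n$ from Lemma~\ref{lem:abelian_direct_product} just as you propose. Your remark that the rank-one result is needed in full strength (not only for abelian cocycles) is well taken and is exactly how the paper uses it.
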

\begin{proof}
Let $\mu\in Z^2(\N^1, \Gamma)$ and let $g$ be the generator of $\N^1$. 
Define $h\colon \N^1 \to \Gamma$ by the rules $h(e)=h(g)=1$ 
and $\mu(g, g^p)= h(g^p)/h(g^{p+1})$ for $p=1,2,\dots$
Then the equation 
\begin{equation}\label{eq:mu-f}
	\mu(g^p, g^q) = \frac{h(g^p) h(g^q)}{h(g^{p+q})} 
\end{equation}
holds  in the case $p=1$ for all $q\ge 1$.  
The cocycle equation affords the inductive step from $p$ to $p+1$: 
$$
\mu(g^{p+1}, g^q) = \mu(g^p, g^{q+1})\frac{\mu(g, g^q)}{\mu(g^p, g)}
= \frac{h(g^p) h(g^{q+1})}{h(g^{p+q+1})} \frac{1\cdot h(g^{q})}{h(g^{q+1})} \frac{h(g^{p+1})}{h(g^p)\cdot 1}  
= \frac{h(g^{p+1}) h(g^q)}{h(g^{p+q+1})},
$$
proving that \eqref{eq:mu-f} holds for all $p\ge 1$, so that 
$\mu$ is a coboundary.\medbreak

\noindent We have $H^2_{\text{\rm abel}}(\N^1,\Gamma) \subseteq 
H^2(\N^1,\Gamma)=\{1\}$, so 
by Lemma~\ref{lem:abelian_direct_product}, 
$H^2_{\text{\rm abel}}(\N^n,\Gamma)=\{1\}$ for all~$n$. 
\end{proof}

\begin{Proposition}\label{prop:as-pairings}
	For any abelian group $\Gamma$, 
	the map $\beta\colon H^2(\N^n, \Gamma) \to P_{\text{\rm a.s.}}(\N^n, \Gamma)$, given by \eqref{eq:SES}, is an isomorphism.
\end{Proposition}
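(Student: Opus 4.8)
The plan is to extract both injectivity and surjectivity of $\beta$ from the exact sequence \eqref{eq:SES}, using the lemma just proved. Injectivity is essentially free: since \eqref{eq:SES} is exact in the middle term, the kernel of $\beta\colon H^2(\N^n,\Gamma)\to P_{\text{\rm a.s.}}(\N^n,\Gamma)$ is exactly the image of the inclusion $H^2_{\text{\rm abel}}(\N^n,\Gamma)\hookrightarrow H^2(\N^n,\Gamma)$, and the preceding lemma gives $H^2_{\text{\rm abel}}(\N^n,\Gamma)=\{1\}$. Hence $\beta$ is injective, and the whole content of the proposition reduces to showing that $\beta$ is \emph{surjective}, i.e.\ that every antisymmetric bimultiplicative pairing $\alpha$ on $\N^n$ arises as $\beta_\mu$ for some cocycle $\mu$.

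For surjectivity I would write an explicit cocycle. Given $\alpha\in P_{\text{\rm a.s.}}(\N^n,\Gamma)$, recorded by the multiplicatively antisymmetric matrix with entries $\alpha(g_i,g_j)$, and writing $x=g_1^{a_1}\cdots g_n^{a_n}$ and $y=g_1^{b_1}\cdots g_n^{b_n}$, I set
\[
	\mu(x,y)=\prod_{i>j}\alpha(g_i,g_j)^{a_i b_j}.
\]
Because exponents add under multiplication in $\N^n$, this $\mu$ is bimultiplicative in each argument separately, and a bimultiplicative map is automatically a cocycle, exactly as observed for $\sigma$ in the proof of Proposition~\ref{prop:yamazaki_cocycles} (in particular $\mu(x,e)=\mu(e,x)=1$). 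So $\mu\in Z^2(\N^n,\Gamma)$ is a legitimate cocycle, and it remains only to identify its antisymmetrization.

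To finish I would check $\beta_\mu=\alpha$. Since both $\beta_\mu$ and $\alpha$ are bimultiplicative pairings, it suffices to compare them on the generators $g_k,g_l$. The ordered product above is designed so that $\mu(g_k,g_l)=\alpha(g_k,g_l)$ when $k>l$ and $\mu(g_k,g_l)=1$ when $k\le l$; feeding this into $\beta_\mu(g_k,g_l)=\mu(g_k,g_l)/\mu(g_l,g_k)$ and using the antisymmetry relations \eqref{eq:antisymmetric} to handle the case $k<l$ yields $\beta_\mu(g_k,g_l)=\alpha(g_k,g_l)$ in all three cases $k>l$, $k<l$, $k=l$. I do not expect a genuine obstacle here: the only real design choice is breaking the symmetry by a fixed total order on the generators so that each pair $\{g_i,g_j\}$ contributes to exactly one of $\mu(g_k,g_l)$, $\mu(g_l,g_k)$, and the one subtlety is that the case $k<l$ relies on antisymmetry of $\alpha$ rather than on the definition of $\mu$. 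With $\beta_\mu=\alpha$ established, surjectivity follows and, combined with the injectivity above, proves that $\beta$ is an isomorphism.
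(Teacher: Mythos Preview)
Your proof is correct and follows essentially the same approach as the paper: injectivity from the exact sequence together with $H^2_{\text{abel}}(\N^n,\Gamma)=\{1\}$, and surjectivity by defining a bimultiplicative (hence cocycle) $\mu$ that is ``upper-triangular'' on generators. The only cosmetic differences are that the paper uses the ordering $i<j$ rather than $i>j$ and states the construction on generators before extending bimultiplicatively, whereas you write down the closed-form product directly and spell out the generator check of $\beta_\mu=\alpha$ that the paper leaves as ``clearly''.
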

\begin{proof}
Lemma~\ref{lem:abelian_direct_product} and \eqref{eq:SES} imply that $\beta$ has trivial kernel and is injective.
Now, if $b$ is a $\Gamma$-valued antisymmetric bimultiplicative pairing on $\N^n$, define $\mu\colon \N^n \times \N^n\to \Gamma$ on the generators $g_1,\dots, g_n$ of $\N^n$ by 
$\mu(g_i, g_j) = b(g_i, g_j)$ if $i<j$, $\mu(g_i, g_j) = 1$ if $i\ge j$. Extend $\mu$ to be a bimultiplicative pairing; $\mu$ may not be
antisymmetric, but bimultiplicativity implies that $\mu$ is a cocycle. Clearly $\beta_\mu = b$, proving surjectivity of $\beta$. 
\end{proof}
\noindent Since any bimultiplicative pairing $b$ on $\N^n$ is uniquely determined by the values $q_{ij} := b(g_i, g_j)$, 
Proposition~\ref{prop:as-pairings} and its proof lead us to the following
\begin{Theorem}\label{thm:antisym-matrices}
Let $M_{\text{\rm a.s.}}^n(\Gamma)$ denote the set of $n\times n$
matrices $\mathbf{q}$ with entries in $\Gamma$ which are \textbf{multiplicatively antisymmetric,} meaning that 
$q_{ii}=1$ and $q_{ij}q_{ji}=1$ for all $i,j=1,\dots,n$.
There is a bijection 
$$
M_{\text{\rm a.s.}}^n(\Gamma) \xrightarrow{\sim} H^2(\N^n, \Gamma), 
$$
sending a matrix $\mathbf q=(q_{ij})_{i,j=1}^n$ to the 
cohomology class which contains a unique cocycle $\mu_{\mathbf q}$ such that
$$
\mu_{\mathbf q}(g_i, g_j) = \begin{cases} 
	q_{ij} &\text{if }i<j, \\
	1 &\text{if }i\ge j,
	\end{cases}
	\qquad\mu_{\mathbf q}\text{ is bimultiplicative. \qed} 
$$
\end{Theorem}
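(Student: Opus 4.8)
The plan is to deduce the theorem directly from Proposition~\ref{prop:as-pairings} by translating the group $P_{\text{\rm a.s.}}(\N^n, \Gamma)$ of antisymmetric bimultiplicative pairings into the set $M_{\text{\rm a.s.}}^n(\Gamma)$ of multiplicatively antisymmetric matrices. First I would set up the correspondence on the level of pairings. By the remark preceding the theorem, a bimultiplicative pairing $b$ on $\N^n$ is uniquely determined by the values $q_{ij} = b(g_i, g_j)$, and conversely any assignment of values to the pairs $(g_i, g_j)$ extends uniquely to a bimultiplicative pairing. Under this identification, the antisymmetry conditions \eqref{eq:antisymmetric} for $b$, namely $b(g_i, g_i) = 1$ and $b(g_i, g_j) b(g_j, g_i) = 1$, translate precisely into the matrix conditions $q_{ii} = 1$ and $q_{ij} q_{ji} = 1$. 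Hence $\mathbf{q} \mapsto b_{\mathbf{q}}$, where $b_{\mathbf{q}}(g_i, g_j) = q_{ij}$, is a bijection $M_{\text{\rm a.s.}}^n(\Gamma) \xrightarrow{\sim} P_{\text{\rm a.s.}}(\N^n, \Gamma)$.

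Next I would compose this bijection with the isomorphism $\beta$ from Proposition~\ref{prop:as-pairings}. Since $\beta\colon H^2(\N^n, \Gamma) \to P_{\text{\rm a.s.}}(\N^n, \Gamma)$ is an isomorphism and the matrix correspondence is a bijection, the composite $M_{\text{\rm a.s.}}^n(\Gamma) \to H^2(\N^n, \Gamma)$ sending $\mathbf{q}$ to $\beta^{-1}(b_{\mathbf{q}})$ is a bijection. It then remains only to pin down the explicit representative cocycle $\mu_{\mathbf{q}}$ inside the class $\beta^{-1}(b_{\mathbf{q}})$.

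For this I would take the cocycle produced in the proof of Proposition~\ref{prop:as-pairings}: the bimultiplicative pairing $\mu_{\mathbf{q}}$ determined on generators by $\mu_{\mathbf{q}}(g_i, g_j) = q_{ij}$ for $i < j$ and $\mu_{\mathbf{q}}(g_i, g_j) = 1$ for $i \ge j$. Bimultiplicativity guarantees both that $\mu_{\mathbf{q}}$ is a cocycle and that it is the unique cocycle with these values on generators. A short verification then shows $\beta_{\mu_{\mathbf{q}}} = b_{\mathbf{q}}$: for $i < j$ one gets $q_{ij}/1 = q_{ij}$, for $i > j$ one gets $1/q_{ji} = q_{ij}$ by multiplicative antisymmetry, and for $i = j$ one gets $1$. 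Thus $[\mu_{\mathbf{q}}] = \beta^{-1}(b_{\mathbf{q}})$, and the bijection asserted in the theorem is exactly $\mathbf{q} \mapsto [\mu_{\mathbf{q}}]$.

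I do not expect a substantial obstacle here, since the theorem is in essence a restatement of Proposition~\ref{prop:as-pairings} in matrix language. The only point requiring a little care is the uniqueness clause: one must invoke that a bimultiplicative pairing on the free commutative monoid $\N^n$ is rigidly determined by its values on the generators, so that the stated conditions single out $\mu_{\mathbf{q}}$ uniquely as a cocycle, and not merely up to coboundary.
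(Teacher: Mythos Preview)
Your proposal is correct and follows exactly the route the paper intends: the theorem is stated with a \qed\ because it is meant as an immediate consequence of Proposition~\ref{prop:as-pairings} and its proof, together with the observation (made in the sentence just before the theorem) that bimultiplicative pairings on $\N^n$ are determined by their values on generators. You have simply written out the details of that deduction, including the verification that $\beta_{\mu_{\mathbf q}} = b_{\mathbf q}$ and the remark on uniqueness, which the paper leaves implicit.
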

\noindent We now interpret the Yamazaki bijection  \eqref{eq:yamazaki_cohomology} for the direct product $\N^a \times\N^b$ in terms of matrices.\medbreak

\noindent Let $\Gamma$ be an abelian group and let $a,b\ge 1$. Recall the
Yamazaki map $\mu \mapsto (\mu|_{\N^a}, \mu|_{\N^b}, \alpha_\mu)$ which by Theorem \ref{thm:yamazaki} induces an isomorphism on cohomology. In terms of matrices, this map corresponds to writing a multiplicatively antisymmetric
matrix $\mathbf{q}$ of size $(a+b)\times (a+b)$ with entries in $\Gamma$ as a triple of matrices:
$$
\mathbf{q} \quad \mapsto \quad ((q_{ij})_{1\le i,j\le a},\ (q_{ij})_{a+1\le i,j\le a+b},
(q_{ij})_{1\le i\le a,\ a+1\le j\le a+b}),
$$
of size $a\times a$, $b\times b$ and $a\times b$, respectively,
where the first two matrices are multiplicatively antisymmetric,
and the third matrix is arbitrary.
Note:
\begin{itemize}
	\item the set of all $a\times b$ matrices with entries in $\Gamma$ is in bijection with $P(\N^a, \N^b, \Gamma)$, because a pairing 
	is defined arbitrarily on generators;
	\item multiplicatively antisymmetric matrices $\mathbf q$ are in one-to-one correspondence with triples of matrices with the given properties.
\end{itemize}
\noindent Corollary \ref{cor:factorizable} implies the following criterion.\medbreak

\begin{Corollary}
A cocycle $\mu$ on $\N^a \times \N^b$ is cohomologous to a direct product of a cocycle on $\N^a$ and a cocycle on $\N^b$ if, and only if, 
the pairing $\alpha_\mu$ between $\N^a$ and $\N^b$ is identically $1$.
Equivalently, $\mu$ corresponds to a multiplicatively antisymmetric matrix $\mathbf q$ with the upper right $a\times b$ block filled by $1$: $q_{ij}=1$ whenever $1\le i\le a$ and $a+1\le j\le a+b$.
\end{Corollary}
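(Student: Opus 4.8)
The plan is to deduce both equivalences directly from the results already in hand, since the statement is really just a transcription of Corollary~\ref{cor:factorizable} into the language of multiplicatively antisymmetric matrices. First I would note that the first equivalence is exactly Corollary~\ref{cor:factorizable} applied to $S=\N^a$ and $T=\N^b$: by definition $\mu$ is cohomologous to a direct product $\nu\times\xi$ precisely when $\mu$ is factorizable, and that corollary says this holds if and only if $\alpha_\mu=1$. So nothing new is needed for this half.

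For the second equivalence I would pass to the matrix description of Theorem~\ref{thm:antisym-matrices}. Let $\mathbf q=(q_{ij})$ be the multiplicatively antisymmetric $(a+b)\times(a+b)$ matrix attached to the cohomology class of $\mu$. Because $\alpha_\mu$ depends only on this class, I may replace $\mu$ by the canonical representative $\mu_{\mathbf q}$, for which $\mu(g_i,g_j)=q_{ij}$ when $i<j$, $\mu(g_i,g_j)=1$ when $i\ge j$, and $\mu$ is bimultiplicative. The one genuine computation is to read off $\alpha_\mu$ on generators: for $1\le i\le a$ and $a+1\le j\le a+b$ we have $i<j$, so $\mu(g_i,g_j)=q_{ij}$, whereas $\mu(g_j,g_i)=1$ since $j>i$, giving
$$
\alpha_\mu(g_i,g_j)=\frac{\mu(g_i,g_j)}{\mu(g_j,g_i)}=q_{ij}.
$$

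To finish I would invoke bimultiplicativity: as $\alpha_\mu\in P(\N^a,\N^b,\Gamma)$ it is determined by its values on generators, so $\alpha_\mu=1$ if and only if $q_{ij}=1$ for all $i,j$ with $1\le i\le a$ and $a+1\le j\le a+b$, i.e.\ exactly when every entry of the upper right $a\times b$ block of $\mathbf q$ equals $1$. Together with the first equivalence this gives the claimed criterion. There is no real obstacle here; the only place demanding care is the index bookkeeping in the case distinction defining $\mu_{\mathbf q}$, making sure that evaluating $\alpha_\mu$ on a cross-pair $(g_i,g_j)$ returns the single off-diagonal entry $q_{ij}$ rather than an antisymmetrized product.
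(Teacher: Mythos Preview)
Your proposal is correct and follows essentially the same route as the paper: the paper derives the Corollary directly from Corollary~\ref{cor:factorizable} together with the preceding paragraph, which identifies the third component $\alpha_\mu$ of the Yamazaki factorization with the upper right $a\times b$ block of the matrix $\mathbf q$. Your explicit computation of $\alpha_\mu(g_i,g_j)=q_{ij}$ for the canonical representative $\mu_{\mathbf q}$ simply spells out that identification in detail.
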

	
\section{Cocycle twists of graded algebras}
\label{sect:twists}	

\subsection{A cocycle twist of a graded associative algebra}

Let $\F$ be a field and $S$ be a monoid. The category of $S$-graded $\F$-algebras consists of unital associative algebras $A$ such that 
$A = \bigoplus_{s\in S} A_s$ with $A_s A_t \subseteq A_{st}$ for all $s,t\in S$, and the identity element $1_A$ of $A$ lies in $A_e$ 
where $e$ is the identity of $S$. \medbreak

\begin{Definition}\label{def:twist}
\cite[Definition I.12.16]{BrownGoodearl}
		Let $S$ be a monoid and $A$ an $S$-graded $\F$-algebra. Fix a 2-cocyle $\mu\in Z^2(S, \F^{\times})$. Let $A'$ be a copy of $A$, viewed as $S$-graded vector space over $\F$ having natural $S$-graded vector space isomorphism $a\mapsto a'$. Define a product on $A'$ by
		\begin{equation}\label{eq:twisted_product}
		a'*b'=\mu(x,y)(ab)'
	    \end{equation}
	for homogeneous elements $a\in A_x$ and $b\in A_y$, and extend by linearity, where $ab$ is the product of $a$ and $b$ in $A$. Then $A'$ is an $S$-graded $\F$-algebra, called the \textbf{twist} of $A$ by $\mu$; the map $a\mapsto a'$ is called the \textbf{twist map}. 
\end{Definition}	
\noindent We denote the twist of $A$ by $\mu$ as $A_\mu$ and think of $A_\mu$ as having the same elements as $A$ but a different product.
		We write the product on $A_\mu$ as $*$, or for emphasis 
		as $*_\mu$.
Note that cocycle equation \eqref{eq:cocycle} guarantees associativity and unitality of $A_\mu$, where the identity element is $1'_A$. Moreover, $A_\mu$ remains an $S$-graded algebra. 
We now note that the isomorphism class of the algebra $A_\mu$ 
depends only on the cohomology class of $\mu$ in $H^2(S,\F^\times)$:\medbreak

\begin{Lemma}\label{lem:coboundary_isomorphism}
Let $A$ be an $S$-graded algebra, $\mu, \nu \in Z^2(S, \F^\times)$, and consider the twists $A_\mu$ (with canonical graded space isomorphism $a\in A \mapsto a'\in A_\mu$) and $A_\nu$ (with $a\in A \mapsto a''\in A_\nu$). 
If the cocycles $\mu,\nu$ are cohomologous, that is $\mu = (\delta h)\nu$ with $h\colon S \to \F^\times$,
 then there is an isomorphism $\phi_h\colon A_\mu \to A_\nu$ of $S$-graded algebras, 
 given on homogeneous $a\in A_s$ by $\phi_h(a') = h(s)a''$.
\end{Lemma}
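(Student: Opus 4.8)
The plan is to check, in turn, that $\phi_h$ is a well-defined $\F$-linear bijection which is graded, unital and multiplicative; all of these but multiplicativity are immediate, and multiplicativity will reduce precisely to the coboundary relation $\mu=(\delta h)\nu$.

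First I would record that $\phi_h$ is well defined and $\F$-linear: it is specified on homogeneous elements by the scalar rule $\phi_h(a')=h(s)a''$ for $a\in A_s$ and extended by linearity, and since the homogeneous components span $A_\mu$ this determines a unique linear map $A_\mu\to A_\nu$. It is graded because it sends $(A_\mu)_s$ into $(A_\nu)_s$ for every $s\in S$. It is bijective because each $h(s)$ lies in $\F^\times$ and is therefore invertible; concretely, the rule $a''\mapsto h(s)^{-1}a'$ on $(A_\nu)_s$ furnishes a two-sided inverse. Finally $\phi_h$ is unital: since $1_A\in A_e$ and $h(e)=1$, we get $\phi_h(1'_A)=h(e)1''_A=1''_A$.

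The substantive step is multiplicativity. On homogeneous $a\in A_x$ and $b\in A_y$ I would expand both sides of $\phi_h(a'*_\mu b')=\phi_h(a')*_\nu\phi_h(b')$. The left-hand side uses $a'*_\mu b'=\mu(x,y)(ab)'$ together with $ab\in A_{xy}$, so that $\phi_h$ scales by $h(xy)$, giving $\mu(x,y)h(xy)(ab)''$. The right-hand side equals $h(x)h(y)\nu(x,y)(ab)''$. Comparing the two, multiplicativity holds exactly when $\mu(x,y)h(xy)=h(x)h(y)\nu(x,y)$, that is, when $\mu(x,y)=\delta h(x,y)\,\nu(x,y)$, which is the hypothesis $\mu=(\delta h)\nu$. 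Extending by bilinearity then yields multiplicativity on all of $A_\mu$.

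There is no genuine obstacle here; the argument is a direct verification. The one point demanding care is the bookkeeping of gradings in the multiplicativity step: the product $ab$ has degree $xy$, so $\phi_h$ contributes the single factor $h(xy)$ rather than $h(x)h(y)$, and it is exactly this mismatch, measured by $\delta h$, that the cohomologous hypothesis is tailored to absorb.
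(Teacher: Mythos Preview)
Your proof is correct and follows essentially the same approach as the paper: verify that $\phi_h$ is a graded vector space isomorphism because each $h(s)\in\F^\times$, then check multiplicativity on homogeneous elements by computing both sides and invoking $\mu=(\delta h)\nu$. The paper's version is terser, omitting the explicit unitality and well-definedness remarks, but the substance is identical.
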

\begin{proof} It is clear that $\phi_h$ is an isomorphism of graded vector spaces as $h(s)$ are invertible elements of $\F$.
For $a\in A_s$, $b\in A_t$
we have $\phi_h(a'*_\mu b') = \mu(s,t) \phi_h((ab)') = \mu(s,t)h(st) (ab)''$.
On the other hand, $\phi_h(a')*_\nu\phi_h(b') = h(s)a''*_\nu h(t)b'' = \nu(s,t) h(s)h(t) (ab)''$. This is equal to $\phi_h(a'*_\mu b')$ as $\mu = (\delta h)\nu$.     
\end{proof}
\noindent It turns out that the morphisms between twists of two $S$-graded algebras by $\mu$ are the same, as a set, as morphisms between the untwisted algebras: 
	\begin{Lemma}\label{lem:morphisms_same_monoid}
		Let $A$, $B$ be $\F$-algebras graded by a monoid $S$, and let  $\phi\colon A\to B$ be a morphism of $S$-graded algebras, that is to say $\phi(A_s)\subseteq B_s$ for all $s\in S$. Assume $\mu\in Z^2(S,\F^{\times})$. Then the map $\phi'\colon A_\mu \to B_\mu$, defined by $\phi'(a')=(\phi(a))'$, is also a morphism of algebras.
	\end{Lemma}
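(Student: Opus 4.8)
The plan is to verify directly that $\phi'$ respects the twisted multiplication, since compatibility with the grading and unitality are immediate from the corresponding properties of $\phi$. The map $\phi'$ is well defined as a linear map because $a\mapsto a'$ and $b\mapsto b'$ are the canonical graded vector space isomorphisms and $\phi$ is linear; moreover $\phi'$ sends $(A_\mu)_s = (A_s)'$ into $(B_\mu)_s = (B_s)'$ because $\phi(A_s)\subseteq B_s$. So the only substantive point is multiplicativity with respect to $*$.

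First I would take homogeneous elements $a\in A_x$ and $b\in A_y$ and simply unwind both sides of the desired identity $\phi'(a'*b') = \phi'(a')*\phi'(b')$. On the left, using the definition \eqref{eq:twisted_product} of the twisted product in $A_\mu$, we have $a'*b' = \mu(x,y)(ab)'$, so that $\phi'(a'*b') = \mu(x,y)\,\phi'\bigl((ab)'\bigr) = \mu(x,y)\,(\phi(ab))'$. On the right, $\phi'(a') = (\phi(a))'$ lies in $(B_x)'$ and $\phi'(b') = (\phi(b))'$ lies in $(B_y)'$ since $\phi$ is graded, so the twisted product in $B_\mu$ gives $\phi'(a')*\phi'(b') = \mu(x,y)\,(\phi(a)\phi(b))'$. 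The two expressions agree precisely because $\phi$ is a homomorphism of the untwisted algebras, so $\phi(ab) = \phi(a)\phi(b)$, and the cocycle factor $\mu(x,y)$ is the same on both sides.

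There is essentially no main obstacle here: the proof is a one-line computation once the bookkeeping of which grading component each element sits in has been made explicit. The only thing one must be careful about is that the same cocycle $\mu$ is used to twist both $A$ and $B$ and that $\phi$ is graded, so that the grading degrees $x$ and $y$ that determine the scalar $\mu(x,y)$ are unchanged under $\phi$; this is exactly what guarantees that the twisting scalars cancel. Finally I would extend the identity from homogeneous elements to all of $A_\mu$ by bilinearity of both products and linearity of $\phi'$, completing the verification that $\phi'$ is a morphism of $S$-graded algebras.
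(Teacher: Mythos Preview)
Your proof is correct and follows essentially the same approach as the paper: compute both sides of $\phi'(a'*b')=\phi'(a')*\phi'(b')$ on homogeneous elements using the definition of the twisted product, note that the same scalar $\mu(x,y)$ appears because $\phi$ is graded, cancel using $\phi(ab)=\phi(a)\phi(b)$, and extend by linearity.
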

	\begin{proof} 
Consider $S$-homogeneous elements $a_1\in A_{s_1}$, $a_2\in A_{s_2}$ with $s_1, s_2\in S$. One has 
$\phi'(a_1'* a_2') = \phi'(\mu(s_1,s_2) (a_1a_2)') = \mu(s_1,s_2) \phi(a_1a_2) '$. Since $\phi$ intertwines the untwisted products on $A$ and $B$,  this is  $\mu(s_1,s_2)\left( \phi(a_1)\phi(a_2) \right)' = 
\left(\phi(a_1)\right)' * \left(\phi(a_2)\right)' =
\phi'(a_1') * \phi'(a_2')$
because $\phi'(a_1')\in B_{s_1}$ and $\phi'(a_2')\in B_{s_2}$.
We have shown that $\phi'(a_1'* a_2') = \phi'(a_1') * \phi'(a_2')$
for homogeneous $a_1$ and $a_2$; 
by linearity, the same is true for arbitrary $a_1,a_2\in A$.
\end{proof}
	
\subsection{Morphisms between twists of algebras graded by different monoids}

We extend Lemma~\ref{lem:morphisms_same_monoid} to the case where each one of the algebras $A$ and $B$ has its own grading monoid. There must be given a morphism between the grading monoids, and the twisting cocycle transforms as follows:\medbreak

\begin{Definition}
Let $S\xrightarrow{f} T$ be a morphism between monoids $S$ and $T$.
If $\mu\colon T \times T \to \Gamma$ is a cocycle on $T$, the \textbf{pullback} of $\mu$ along $f$ is 
$\mu^f\colon S \times S \to \Gamma$ defined by $\mu^f(s,s')
=\mu(f(s), f(s'))$. 
\end{Definition}
\noindent It is easy to see that $\mu^f$ is a cocycle on $S$. 
The following result is proved by inserting $f$ in the appropriate places in the proof of Lemma~\ref{lem:morphisms_same_monoid}:
\begin{Lemma}\label{general case of lemma}
	Let $S\xrightarrow{f} T$ be a mophism of monoids, $A$ be an $S$-graded $\F$-algebra and $B$ be a $T$-graded $\F$-algebra. 
	Suppose that $\phi\colon A\to B$ is a morphism of graded algebras 
	compatible with $f$, that is, 
	\begin{equation*}
		\phi(A_s)\subseteq B_{f(s)}
	\end{equation*}
	for all $s\in S$. Then for any cocycle $\mu\in Z^2(T,\F^\times)$, the map 
$$
\phi'\colon A_{\mu^f}\rightarrow B_{\mu},
$$
which coincides with $\phi$ on the underlying vector spaces of $A_{\mu^f}$ and $B_{\mu}$, is a morphism of algebras.\qed
\end{Lemma}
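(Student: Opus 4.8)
The plan is to follow the proof of Lemma~\ref{lem:morphisms_same_monoid} almost verbatim, the only new ingredient being that the two algebras carry their degrees in different monoids linked by $f$. First I would record that $\mu^f$ is genuinely a cocycle on $S$ (this is the remark preceding the statement), so that the twist $A_{\mu^f}$ is defined and $\phi'$ has a well-defined source; since $\phi'$ agrees with $\phi$ on underlying vector spaces it is automatically $\F$-linear, so only multiplicativity (and unitality) require checking. By linearity it suffices to verify multiplicativity on homogeneous elements, so I would fix $a_1\in A_{s_1}$ and $a_2\in A_{s_2}$ with $s_1,s_2\in S$.

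The computation then has two halves. On the source side, the twisted product in $A_{\mu^f}$ reads $a_1'*a_2' = \mu^f(s_1,s_2)(a_1a_2)' = \mu(f(s_1),f(s_2))(a_1a_2)'$ by the definition of the pullback cocycle, whence $\phi'(a_1'*a_2') = \mu(f(s_1),f(s_2))\,\phi(a_1a_2)$. On the target side, the crucial and only genuinely new step is to read off the $T$-degrees of the images: the compatibility hypothesis $\phi(A_s)\subseteq B_{f(s)}$ gives $\phi(a_1)\in B_{f(s_1)}$ and $\phi(a_2)\in B_{f(s_2)}$, so the twisted product in $B_\mu$ is $\phi(a_1)*_\mu\phi(a_2) = \mu(f(s_1),f(s_2))\,\phi(a_1)\phi(a_2)$, with the scalar indexed by the $f$-images. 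Because $\phi$ is an algebra morphism in the untwisted sense, $\phi(a_1)\phi(a_2) = \phi(a_1a_2)$, and the two halves carry the identical scalar $\mu(f(s_1),f(s_2))$; thus $\phi'(a_1')*_\mu\phi'(a_2') = \phi'(a_1'*a_2')$.

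The design of the pullback cocycle is exactly what makes the scalars agree, so there is no real obstacle: the content is entirely bookkeeping, and the single place where the hypotheses enter nontrivially is the compatibility condition, invoked once to locate $\phi(a_1)$ and $\phi(a_2)$ in the correct $T$-graded components. To finish I would note unitality, since $\phi(1_A)=1_B$ holds as $\phi$ is a unital morphism and the twist map leaves identity elements unchanged, and then extend the verified multiplicativity from homogeneous to arbitrary elements by bilinearity of both twisted products.
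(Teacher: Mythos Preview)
Your proposal is correct and is exactly the approach the paper indicates: the paper does not write out a separate proof but states that the result is obtained by inserting $f$ in the appropriate places in the proof of Lemma~\ref{lem:morphisms_same_monoid}, which is precisely what you have done. Your identification of the compatibility hypothesis $\phi(A_s)\subseteq B_{f(s)}$ as the single nontrivial input, used to pin down the $T$-degrees of $\phi(a_1)$ and $\phi(a_2)$ so that the scalar $\mu(f(s_1),f(s_2))$ matches on both sides, is exactly the point.
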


\medbreak

\subsection{Twisted tensor product of graded algebras}	
\label{subsect:twisted_tensor_product}

Consider two monoids $S$ and $T$. Let $B$ be an $S$-graded $\F$-algebra and $C$ be a $T$-graded $\F$-algebra. The space $B\otimes C$, 
with the classical multiplication given by 
$(b\otimes c)(b'\otimes c')=bb'\otimes cc'$ for all $b,b'\in B$ and 
$c,c'\in C$ and the identity $1_{B\otimes C} = 1_B\otimes 1_C$, 
is a unital associative algebra which is $S\times T$-graded via
\begin{equation*}
		B\otimes C=\bigoplus_{(s,t)\in S\times T} (B\otimes C)_{(s,t)},
\end{equation*}
where $(B\otimes C)_{(s,t)}=B_s\otimes C_t$. 
By identifying $B\cong B\otimes\{1_C\}$ and $C \cong \{1_B\}\otimes C$, 
we can consider $B$ and $C$ as subalgebras of $B\otimes C$.
This \textbf{classical tensor product} of two graded algebras is the simplest example of the following graded version of \cite[Definition 21.3]{Majid_primer}:
\begin{Definition}
A \textbf{graded algebra factorization} is an graded algebra  
$X$ and two graded subalgebras $B,C\subseteq X$ such that the product map $X\otimes X \to X$, restricted onto $B\otimes C$,   
gives an isomorphism $B\otimes C\xrightarrow{\sim} X$ of vector spaces.
\end{Definition}	
\noindent We now introduce a family of graded algebra factorizations into $B$ and $C$ parameterized by pairings $\alpha\in P(S, T, \F^\times)$. The case $\alpha=1$ corresponds to the classical tensor product $B\otimes C$:
\begin{Lemma}\label{lem:twisted_tensor_product}
Let $B$ be an $S$-graded $\F$-algebra, $C$ be a $T$-graded $\F$-algebra, and $\alpha \in P(S, T, \F^\times)$ be a pairing. 
There exists unique $S\times T$-graded algebra product $*$  
on the underlying vector space $B\otimes C$, 
such that $B\cong B\times \{1_C\}$ and $C\cong \{1_B\}\times C$ are 
subalgebras, and $c*b = \alpha(s,t)b\otimes c$ for all $b\in B_s$
and $c\in C_t$ with $s\in S$, $t\in T$.	
\end{Lemma}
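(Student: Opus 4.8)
The plan is to obtain the product as a cocycle twist of the classical tensor product $B\otimes C$. Since $B\otimes C$ is already an $S\times T$-graded algebra, associativity, unitality and the grading will then come for free from Definition~\ref{def:twist}, and the only real work is to choose the correct cocycle and to check the two boundary conditions in the statement.

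For existence, I would define $\tau\colon (S\times T)\times(S\times T)\to \F^\times$ by $\tau\bigl((s,t),(s',t')\bigr)=\alpha(s',t)$, in the same spirit as the cocycle $\sigma$ of \eqref{eq:cocycle-sigma}. Bimultiplicativity of $\alpha$ makes $\tau$ bimultiplicative in each of its two $(S\times T)$-arguments, and a bimultiplicative map satisfies the cocycle equation (exactly as noted for $\sigma$), so $\tau\in Z^2(S\times T,\F^\times)$. Taking $*$ to be the product of the twist $(B\otimes C)_\tau$ then automatically yields a unital associative $S\times T$-graded algebra, and it remains to evaluate on homogeneous $b\in B_s$, $c\in C_t$, $b'\in B_{s'}$, $c'\in C_{t'}$:
$$
(b\otimes c)*(b'\otimes c')=\tau\bigl((s,t),(s',t')\bigr)\,bb'\otimes cc'=\alpha(s',t)\,bb'\otimes cc'.
$$
Specialising, the coefficient is $1$ whenever one tensor leg is a unit, because $\alpha$ takes value $1$ on any pair involving an identity; this gives $(b\otimes 1)*(b'\otimes 1)=bb'\otimes 1$ and $(1\otimes c)*(1\otimes c')=1\otimes cc'$, so $B\otimes\{1_C\}$ and $\{1_B\}\otimes C$ are subalgebras carrying the original products of $B$ and $C$. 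Likewise $\tau\bigl((s,e_T),(e_S,t)\bigr)=\alpha(e_S,e_T)=1$ and $\tau\bigl((e_S,t),(s,e_T)\bigr)=\alpha(s,t)$, so that $(b\otimes 1)*(1\otimes c)=b\otimes c$ and $(1\otimes c)*(b\otimes 1)=\alpha(s,t)\,b\otimes c$, the latter being precisely the required relation $c*b=\alpha(s,t)\,b\otimes c$.

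For uniqueness, I would show that any product satisfying the hypotheses must agree with the one above. The normalisation to record first is $b\otimes c=(b\otimes 1)*(1\otimes c)$, which expresses that $(B\otimes C,*)$ is the graded algebra factorization into $B$ and $C$ along the identity map of $B\otimes C$; without it the cross relation alone does not determine $*$. Writing both factors in this form and straightening with associativity, the subalgebra products, and the relation $(1\otimes c)*(b'\otimes 1)=\alpha(s',t)\,b'\otimes c$, I would move every $B$-leg to the left of every $C$-leg:
$$
(b\otimes c)*(b'\otimes c')=(b\otimes 1)*\bigl[(1\otimes c)*(b'\otimes 1)\bigr]*(1\otimes c')=\alpha(s',t)\,bb'\otimes cc'.
$$
As this pins down the product on all homogeneous pairs, it coincides with the twist product, giving uniqueness.

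The step I expect to require the most care is purely bookkeeping: keeping the two $\F^\times$-arguments of $\alpha$ in the correct order so that the twist deposits the factor $\alpha(s',t)$ on the intended side. Note that $\alpha_\tau=\alpha^{-1}$, and that using $\sigma$ of \eqref{eq:cocycle-sigma} in place of $\tau$ would instead yield the opposite relation $b*c=\alpha(s,t)\,b\otimes c$; so the choice between $\sigma$ and $\tau$ is exactly what fixes on which side the pairing appears. I would also make the normalisation $b\otimes c=(b\otimes 1)*(1\otimes c)$ explicit, since it is what makes the uniqueness claim correct.
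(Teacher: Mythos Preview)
Your proof is correct and follows exactly the paper's approach: existence via the cocycle twist by $\tau\bigl((s,t),(s',t')\bigr)=\alpha(s',t)$, and uniqueness by expanding the product on homogeneous tensors. Your uniqueness argument is in fact more careful than the paper's one-line ``clearly'': you rightly make explicit the normalisation $(b\otimes 1)*(1\otimes c)=b\otimes c$ that the paper uses tacitly, and your closing observation that $\alpha_\tau=\alpha^{-1}$ is precisely the content of the paper's Remark following the Lemma.
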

\begin{proof}
Clearly, if the associative product $*$ exists, it is unique as
$(b\otimes c)*(b'\otimes c')$ must be given by 
$\alpha(s,t) bb'\otimes cc'$ whenever $b'\in B_s$ and $c\in C_t$.\medbreak

\noindent To show existence of $*$, define, in a slight modification of \eqref{eq:cocycle-sigma}, the cocycle $\tau \in Z^2(S\times T, \F^\times)$ 
by 
$$
\tau((s,t), (s',t')) = \alpha(s',t).
$$
Consider the twist $(B\otimes C)_\tau$ of the classical tensor product algerba $B\otimes C$. The twisted product $*=*_\tau$ on the underlying vector space $B\otimes C$ has the 
required properties; in particular, \eqref{eq:twisted_product}
implies that $c*b = \tau((e_S,t),(s,e_S))b\otimes c = \alpha(s,t)b\otimes c$ for $b\in B_s$ and $c\in C_t$, as required.
\end{proof}
\begin{Remark}
The cocycle $\tau\in Z^2(S\times T, \F^\times)$ used in the proof of the Lemma has Yamazaki factorization $Y(\tau) = (1, 1, \frac1\alpha)$ in $Z^2(S,\F^\times)
\times Z^2(T, \F^\times) \times P(S, T, \F^\times)$. 
The reason for inverting $\alpha$ is purely technical, namely,
$\alpha$ appears in the equation for $c*b$ and not $b*c$.
\end{Remark}
\begin{Definition}
Given an $S$-graded algebra $B$, a $T$-graded algebra $C$ and a pairing 
$\alpha\in P(S, T, \F^\times)$, the algebra factorization described in 
Lemma \ref{lem:twisted_tensor_product} is called the \textbf{$\alpha$-twisted tensor product} of $B$ and $C$ and 
denoted $B\otimes_\alpha C$. 
\end{Definition}
\noindent It turns out that all twists of the classical tensor product $B\otimes C$ by cocycles
on $S\times T$ can be written as twisted tensor products of a twist of $B$ and a twist of $C$, as follows.\medbreak

\begin{Proposition}
Let $B$ be an $S$-graded $\F$-algebra, $C$ be a $T$-graded $\F$-algebra, and $\mu\in Z^2(S\times T, \F^\times)$. 
Suppose that the Yamazaki factorization \eqref{eq:yamazaki} of $\mu$
is $Y(\mu)=(\nu, \xi, 1/\alpha)$ where $\nu \in Z^2(S, \F^\times)$, 
$\xi\in Z^2(T, \F^\times)$ and $\alpha\in P(S, T, \F^\times)$. 
Then the 
twisted algebra $(B\otimes C)_\mu$ is isomorphic to the 
twisted tensor product
$B_\nu \otimes_\alpha C_\xi$.
\end{Proposition}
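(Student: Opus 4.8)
The plan is to realise every algebra in sight as a twist of the single $S\times T$-graded algebra $B\otimes C$ by an appropriate cocycle, and then to compare those cocycles inside $H^2(S\times T,\F^\times)$ by means of the Yamazaki homomorphism. The only genuinely new ingredient needed is the behaviour of iterated twists, which follows at once from the definition: twisting an $S$-graded algebra $A$ first by $\mu_1$ and then by $\mu_2$ multiplies each homogeneous product by $\mu_1(s,t)\mu_2(s,t)$, so by \eqref{eq:twisted_product} we have $(A_{\mu_1})_{\mu_2}=A_{\mu_1\mu_2}$, the twist by the pointwise product $\mu_1\mu_2$.

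First I would identify the classical tensor product of the two twists with one twist of $B\otimes C$, namely $B_\nu\otimes C_\xi=(B\otimes C)_{\nu\times\xi}$. This is a one-line check on homogeneous elements: for $b\otimes c\in B_s\otimes C_t$ and $b'\otimes c'\in B_{s'}\otimes C_{t'}$ the product equals $\nu(s,s')\,\xi(t,t')\,bb'\otimes cc'$ in both algebras. Next, Lemma~\ref{lem:twisted_tensor_product} presents the twisted tensor product as a further twist, $B_\nu\otimes_\alpha C_\xi=(B_\nu\otimes C_\xi)_\tau$ with $\tau((s,t),(s',t'))=\alpha(s',t)$. Combining this with the previous identity and the iterated-twist rule gives
\[
B_\nu\otimes_\alpha C_\xi=\bigl((B\otimes C)_{\nu\times\xi}\bigr)_\tau=(B\otimes C)_{(\nu\times\xi)\tau},
\]
an equality of $S\times T$-graded algebras on the common underlying space $B\otimes C$.

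It then remains only to compare the cocycle $(\nu\times\xi)\tau$ with $\mu$. Since the Yamazaki map $Y$ is a group homomorphism by Proposition~\ref{prop:yamazaki_cocycles}, and since $Y(\nu\times\xi)=(\nu,\xi,1)$ (the third component being trivial exactly as in the proof of Corollary~\ref{cor:factorizable}, because $\nu$ and $\xi$ take the value $1$ on arguments involving an identity) while $Y(\tau)=(1,1,1/\alpha)$ by the Remark following Lemma~\ref{lem:twisted_tensor_product}, I obtain $Y\bigl((\nu\times\xi)\tau\bigr)=(\nu,\xi,1/\alpha)=Y(\mu)$ by hypothesis. Consequently the quotient $\mu/\bigl((\nu\times\xi)\tau\bigr)$ lies in the kernel of $Y$, so its cohomology class maps to the identity under the induced map \eqref{eq:yamazaki_cohomology}; by injectivity of that map (Theorem~\ref{thm:yamazaki}) the class is trivial, i.e.\ $\mu$ and $(\nu\times\xi)\tau$ are cohomologous.

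Finally, Lemma~\ref{lem:coboundary_isomorphism} supplies an isomorphism of $S\times T$-graded algebras $(B\otimes C)_{(\nu\times\xi)\tau}\xrightarrow{\sim}(B\otimes C)_\mu$, which combined with the displayed equality yields $B_\nu\otimes_\alpha C_\xi\cong(B\otimes C)_\mu$. I expect the only delicate point to be the bookkeeping of the third Yamazaki component: one must track the inversion of $\alpha$ built into $\tau$ (explained in the Remark) so that the pairings line up, but once $Y(\mu)$ is presented in the normalised form $(\nu,\xi,1/\alpha)$ the matching is automatic, and none of the remaining verifications is more than a routine computation.
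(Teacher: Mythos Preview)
Your proof is correct and follows essentially the same route as the paper: both arguments identify $B_\nu\otimes_\alpha C_\xi$ with $(B\otimes C)_{(\nu\times\xi)\tau}$ via the iterated-twist rule and the construction of Lemma~\ref{lem:twisted_tensor_product}, and then use Theorem~\ref{thm:yamazaki} together with Lemma~\ref{lem:coboundary_isomorphism} to pass from $(\nu\times\xi)\tau$ to $\mu$. Your write-up is slightly more explicit about the iterated-twist identity and the computation of $Y$ on each factor, but the logical structure is the same.
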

\begin{proof}
By Theorem \ref{thm:yamazaki}, $\mu$ is cohomologous to the cocycle 
$\mu_1 = (\nu \times \xi)\tau$ where $Y(\tau) = (1,1,1/\alpha)$,
and so by Lemma \ref{lem:coboundary_isomorphism}, 
$(B\otimes C)_\mu$ is isomorphic to $(B\otimes C)_{\mu_1}$.
It is easy to observe that  $(B\otimes C)_{\nu \times \xi}$
is the classical tensor product $B_\nu \otimes C_\xi$, and so
$(B\otimes C)_{(\nu \times \xi)\tau}\cong \left(B_\nu \otimes C_\xi\right)_\tau$ which is $B_\nu \otimes_\alpha  C_\xi$, exactly as in the proof of Lemma \ref{lem:twisted_tensor_product}. 
\end{proof}	
\begin{Remark}
	In particular, twists of $B\otimes C$ by factorizable cocycles on $S\times T$ are classical tensor products 
	$B_\nu \otimes C_\xi$ of twists of $B$ and $C$.
\end{Remark}

\section{Quantum Segre maps via cocycle quantization}

\subsection{Classical Segre embeddings}
In classical algebraic geometry, 
the polynomial algebra $\mathcal A^n := \F[x_0,\dots,x_n]$ serves as the homogeneous coordinate ring of the $n$-dimensional projective space $\mathbb P^n := \mathbb{P}^n(\F)$. Projective subvarieties of $\mathbb{P}^n$
are zero loci of homogeneous ideals of $\mathcal A^n$, with the exception of $\langle x_0, \dots, x_n\rangle$. 
Given $n,m\ge 1$, the \textbf{classical Segre map}
is an embedding 
$$
\mathbb P^n \times \mathbb P^m \hookrightarrow \mathbb P^{(n+1)(m+1)-1}
$$
which equips the Cartesian product of $\mathbb P^n$ and $\mathbb P^m$
with a structure of a projective subvariety of $\mathbb P^{(n+1)(m+1)-1}$.
To describe this map explicitly, we assume that 
the $(n+1)(m+1)$ homogeneous coordinates on $\mathbb P^{(n+1)(m+1)-1}$
are labeled by \textbf{pairs of indices:}
$$
z_{00}, z_{01}, \dots, z_{nm}.
$$
If $[x_0:x_1:\dots:x_n]$ are homogeneous coordinates of a point $x$ on $\mathbb P^n$, and  $[y_0:y_1:\dots:y_m]$, of a point $y$ on $\mathbb P^m$, the image of $(x, y)$ under the Segre embedding is the point $z$ with homogeneous coordinates $z_{ij} = x_i y_j$, $0\le i\le n$, $0\le j\le m$.
All such point $z$ form the \textbf{Segre subvariety} 
of $\mathbb P^{(n+1)(m+1)-1}$.\medbreak

\noindent In order to quantize the Segre embedding, we need to consider its equivalent form as a morphism between homogeneous coordinate rings. 
(We will later use gradings on those rings 
to quantize them into noncommutative algebras.)
The above coordinate description means that the 
classical Segre map is the following morphism of commutative algebras:
	\begin{equation*}
		s_{n,m}\colon \F[z_{00},\dots,z_{nm}]\rightarrow\F[x_0,\dots,x_n]\otimes
		\F[y_0,\dots,y_m], 
\qquad 		s_{n,m}(z_{ij})=x_i\otimes y_j.
	\end{equation*}
The image of $s_{n,m}$ is the \textbf{Segre product} 
\cite[Ch.3, \S2]{PP_QA}
of $\mathcal A^n$ and $\mathcal A^m$, namely the algebra spanned by monomials where the $x$-degree and the $y$-degree are the same.
The kernel of $s_{n,m}$ is the ideal generated by relations 
which say that the matrix $(z_{ij})$ of homogeneous coordinates 
has rank $1$: that is, all $2\times 2$ minors of this matrix are zero.
That these relations are quadratic justifies the alternative name \textbf{Segre quadric} for the Segre subvariety of $\mathbb P^{(n+1)(m+1)-1}$.\medbreak

\subsection{Gradings on the homogeneous coordinate rings}
	
We view $\F[z_{00},\dots,z_{nm}]$ as an algebra which is graded by the monoid $\N^{(n+1)(m+1)}$.
For convenience we now write this monoid additively.  
Elements of $\N^{(n+1)(m+1)}$ are viewed as matrices of size 
$(n+1)\times (m+1)$, with rows indexed by $0,\dots,n$ and columns indexed by $0, \dots, m$. 
The degree of the generator $z_{ij}$ is $e_{ij}\in\N^{(n+1)(m+1)}$,  the unit matrix whose non-zero entry is in the $(i,j)$ position. \medbreak

\noindent The polynomial algebra $\F[x_0,\dots,x_n]$ is graded by the monoid 
$\N^{n+1}$. Hence 
the tensor product $\F[x_0,\dots,x_n]\otimes\F[y_0,\dots,y_m]$ 
acquires, as per  Section \ref{subsect:twisted_tensor_product} above, grading by the monoid $\N^{n+1}\times\N^{m+1}$
(not the same as $\N^{(n+1)(m+1)}$). 
Elements of $\N^{n+1}\times \N^{m+1}$ are viewed as pairs of vectors of size $(n+1, m+1)$.
We write $\alpha_i\in\N^{n+1}$, $\beta_j\in\N^{m+1}$ for the unit vectors whose non-zero entries are in the $i$th position and in the $j$th position, respectively.
Here $0\le i\le n$, $0\le j\le m$.	\medbreak

\noindent To consider the classical Segre map $s_{n,m}$ as a map between graded algebras 
with different grading monoids, we define the morphism
\begin{equation}\label{eq:morphism-f}
	f\colon \N^{(n+1)(m+1)}\rightarrow\N^{n+1}\times\N^{m+1}, 
	\quad f(e_{ij})=(\alpha_i,\beta_j).
\end{equation} 
Since $\N^{(n+1)(m+1)}$ is a free commutative monoid with generators $\{e_{ij}:i=0,\dots,n, \ j=0,\dots,m\}$, any mapping of each generator $e_{ij}$ to an element of $\N^{n+1}\times\N^{m+1}$ uniquely extends $f$ 
to $\N^{(n+1)(m+1)}$ as a morphism of monoids. The following is an easy observation:\medbreak

\begin{Lemma}\label{lem:compatible}
The classical Segre map $s_{n,m}$ between the $\N^{(n+1)(m+1)}$-graded $\F$-algebra $\F[z_{00},\dots,z_{nm}]$ and the 
$\N^{n+1}\times\N^{m+1}$-graded $\F$-algebra 
$\F[x_0,\dots,x_n]\otimes\F[y_0,\dots,y_m]$ is compatible with the morphism $f$ of monoids, in the sense of Lemma \ref{general case of lemma}. \qed 
\end{Lemma}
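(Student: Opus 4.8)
The plan is to verify directly the single condition required by Lemma~\ref{general case of lemma}, namely that $s_{n,m}(A_s)\subseteq B_{f(s)}$ for every $s\in\N^{(n+1)(m+1)}$, where $A=\F[z_{00},\dots,z_{nm}]$ and $B=\F[x_0,\dots,x_n]\otimes\F[y_0,\dots,y_m]$. Since each graded component $A_s$ is spanned by a single monomial in the $z_{ij}$, it suffices to check the inclusion on monomials, and by linearity this reduces the whole statement to a degree computation.

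First I would fix $s\in\N^{(n+1)(m+1)}$ and write it additively as $s=\sum_{i,j}k_{ij}e_{ij}$ with $k_{ij}\in\Z_{\ge 0}$, so that the homogeneous component $A_s$ is spanned by the monomial $\prod_{i,j}z_{ij}^{k_{ij}}$. Applying the algebra homomorphism $s_{n,m}$ and using $s_{n,m}(z_{ij})=x_i\otimes y_j$ gives the image $\prod_{i,j}(x_i\otimes y_j)^{k_{ij}}$.

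Next I would compute this product inside the classical tensor product $B$. Because the two tensor factors multiply componentwise, $(x_i\otimes y_j)(x_{i'}\otimes y_{j'})=x_ix_{i'}\otimes y_jy_{j'}$, so the image collapses to $\bigl(\prod_{i,j}x_i^{k_{ij}}\bigr)\otimes\bigl(\prod_{i,j}y_j^{k_{ij}}\bigr)$. This is a homogeneous element of $B$ whose bidegree in $\N^{n+1}\times\N^{m+1}$ is $\bigl(\sum_{i,j}k_{ij}\alpha_i,\ \sum_{i,j}k_{ij}\beta_j\bigr)$, since $x_i$ contributes $\alpha_i$ to the first factor and $y_j$ contributes $\beta_j$ to the second.

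Finally I would compare this bidegree with $f(s)$. Since $f$ is a monoid morphism with $f(e_{ij})=(\alpha_i,\beta_j)$, we have $f(s)=\sum_{i,j}k_{ij}(\alpha_i,\beta_j)=\bigl(\sum_{i,j}k_{ij}\alpha_i,\ \sum_{i,j}k_{ij}\beta_j\bigr)$, which matches the computed bidegree exactly. Hence $s_{n,m}(A_s)\subseteq B_{f(s)}$, establishing compatibility. I do not anticipate any genuine obstacle: the only point requiring care is the index bookkeeping in passing between the single index $(i,j)$ labelling the generators of $\N^{(n+1)(m+1)}$ and the two separate indices labelling the generators of $\N^{n+1}\times\N^{m+1}$, which is precisely what the definition of $f$ is set up to record.
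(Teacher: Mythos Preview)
Your proof is correct; the paper itself gives no explicit argument for this lemma (it is introduced as ``an easy observation'' and the statement ends with \qed), so your direct verification on monomials is exactly the routine check the paper leaves to the reader.
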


\subsection{Quantum projective spaces as cocycle twists}

The polynomial algebra
$$
\mathcal A^N = \F[x_0,\dots,x_N]
$$
in $N+1$ variables over the field $\F$ has a well-studied family of noncommutative deformations, as follows. 
\begin{Definition}
Let $\mathbf q = (q_{ij})_{i,j=0}^N$ be a
matrix of size $(N+1)\times (N+1)$ with entries in $\F^\times$ which is
multiplicatively antisymmetric (see Theorem \ref{thm:antisym-matrices}). The $\F$-algebra $\mathcal A^N_{\mathbf q}$, generated by $X_0,\dots,X_N$ subject to the relations 
\begin{equation}\label{eq:q-commute}
	X_j X_i - q_{ji} X_i X_j =0, \quad 0\le i<j\le N,
\end{equation}
is the \textbf{quantum projective space} defined by $\mathbf q$.
\end{Definition}
\noindent Let $M_{\text{\rm a.s.}}^{N+1}(\F^\times)$ be the set 
of $(N+1)\times (N+1)$ multiplicatively 
antisymmetric matrices. 
Recall from Theorem \ref{thm:antisym-matrices} the  bijection
$$
M_{\text{\rm a.s.}}^{N+1}(\F^\times) \to  H^2(\N^{N+1}, \F^\times), 
\quad \mathbf q \mapsto \mu_{\mathbf q}.
$$
This bijection has explicit manifestation in the following:
\begin{Proposition}\label{prop:twist}
The algebra $\mathcal A^N_{\mathbf q}$ is isomorphic to the 
cocycle twist $\left(\mathcal A^N\right)_{\mu_{\mathbf q}}$. 
\end{Proposition}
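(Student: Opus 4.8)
The plan is to exhibit a surjective homomorphism from $\mathcal A^N_{\mathbf q}$ onto the twist $(\mathcal A^N)_{\mu_{\mathbf q}}$ by means of the universal property of the former, and then to upgrade surjectivity to bijectivity by comparing dimensions in each piece of the $\N^{N+1}$-grading, where both algebras are graded by $\deg X_i = \deg x_i' = g_i$ (the $i$-th standard generator of $\N^{N+1}$).

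First I would compute the relations satisfied in the twist by the images $x_0',\dots,x_N'$ of the generators. By Theorem~\ref{thm:antisym-matrices} we have $\mu_{\mathbf q}(g_i,g_j)=q_{ij}$ for $i<j$ and $\mu_{\mathbf q}(g_i,g_j)=1$ for $i\ge j$. Using that $x_i x_j = x_j x_i$ in the commutative algebra $\mathcal A^N$, definition~\eqref{eq:twisted_product} gives, for $i<j$,
\[
x_j' * x_i' = \mu_{\mathbf q}(g_j,g_i)\,(x_i x_j)' = (x_i x_j)', \qquad
x_i' * x_j' = \mu_{\mathbf q}(g_i,g_j)\,(x_i x_j)' = q_{ij}\,(x_i x_j)'.
\]
Hence $x_j' * x_i' - q_{ji}\, x_i' * x_j' = (1 - q_{ji}q_{ij})\,(x_i x_j)'$, which vanishes precisely by the multiplicative antisymmetry $q_{ji}q_{ij}=1$ of $\mathbf q$. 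Thus $x_0',\dots,x_N'$ satisfy relations~\eqref{eq:q-commute}, and the universal property of $\mathcal A^N_{\mathbf q}$ yields a homomorphism $\psi\colon \mathcal A^N_{\mathbf q}\to (\mathcal A^N)_{\mu_{\mathbf q}}$ with $\psi(X_i)=x_i'$, graded for the $\N^{N+1}$-gradings since the relations are homogeneous.

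Next I would check surjectivity and injectivity degree by degree. Because the $*$-product of homogeneous elements differs from the ordinary product only by a nonzero scalar, the ordered $*$-product in which each $x_i'$ occurs $a_i$ times equals a nonzero scalar multiple of $(x_0^{a_0}\cdots x_N^{a_N})'$; as these elements form a basis of $(\mathcal A^N)_{\mu_{\mathbf q}}$, the map $\psi$ is surjective, and surjective in each $\N^{N+1}$-degree. On the other hand, the relations~\eqref{eq:q-commute} allow any word in the $X_i$ to be reordered into the increasing form $X_0^{a_0}\cdots X_N^{a_N}$ at the cost of a scalar, so the ordered monomials span $\mathcal A^N_{\mathbf q}$; since exactly one such monomial lies in each multidegree $(a_0,\dots,a_N)$, every graded piece $(\mathcal A^N_{\mathbf q})_{(a_0,\dots,a_N)}$ has dimension at most one. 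The corresponding piece of the twist has dimension exactly one, so in each multidegree $\psi$ is a surjection from a space of dimension $\le 1$ onto a one-dimensional space, hence an isomorphism. Therefore $\psi$ is an isomorphism in every multidegree, and so an isomorphism of $\N^{N+1}$-graded algebras.

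The only non-formal ingredient is the spanning statement for $\mathcal A^N_{\mathbf q}$, i.e. the easy half of the PBW property, which I would obtain directly from~\eqref{eq:q-commute} by induction on the number of inversions of a word. I expect this to be the main (though routine) obstacle. Crucially, I do not need linear independence of the ordered monomials in $\mathcal A^N_{\mathbf q}$ in advance, since it emerges from the isomorphism itself; the one-sided dimension bound $\dim(\mathcal A^N_{\mathbf q})_{(a_0,\dots,a_N)}\le 1$ together with surjectivity of $\psi$ is all that is required.
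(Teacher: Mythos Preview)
Your proof is correct and follows essentially the same route as the paper: define a homomorphism $\mathcal A^N_{\mathbf q}\to(\mathcal A^N)_{\mu_{\mathbf q}}$ by checking that the $x_i'$ satisfy~\eqref{eq:q-commute}, observe that ordered monomials span the source and are sent to nonzero scalar multiples of the monomial basis of the target, and conclude bijectivity. The paper packages the final step as ``a spanning set carried to a basis is a basis'', whereas you phrase it as a degree-by-degree dimension count, but this is the same argument.
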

\begin{proof}
As in Definition \ref{def:twist}, 
we write the elements of $\left(\mathcal A^N\right)_{\mu_{\mathbf q}}$
as $p'$ where $p\in \mathcal A^N$.
We claim that there exists an algebra homomorphism 
$\phi\colon \mathcal A^N_{\mathbf q} \to \left(\mathcal A^N\right)_{\mu_{\mathbf q}}$ given on the generators of $\mathcal A^N_{\mathbf q}$ by $\phi(X_i) = x_i'$. We verify 
relations \eqref{eq:q-commute}: if $0\le i<j\le N$, 
\begin{align*}
\phi(X_j)*\phi(X_i) - q_{ji} \,\phi(X_i)*\phi(X_j) & = 
x_j' * x_i' - q_{ji}\, x_i' * x_j' 
\\ & = \mu_{\mathbf q}(g_j, g_i) (x_jx_i)'
- q_{ji} \,\mu_{\mathbf q}(g_i, g_j) (x_ix_j)'
\\ & =  1(x_jx_i)' - q_{ji} q_{ij} (x_i x_j)' = 0
\end{align*}
as $x_jx_i=x_ix_j$ in $\mathcal A^N$ and $q_{ji}q_{ij}=1$.
(We have used the formula for $\mu_{\mathbf q}$ given in Theorem \ref{thm:antisym-matrices}.)\medbreak

\noindent This shows that $\phi$ is a well-defined homomorphism. 
Relations \eqref{eq:q-commute} are easily seen to imply that standard monomials 
$X_0^{k_0}\dots X_N^{k_N}$, $k_0,\dots,k_N\in \Z_{\ge 0}$, form a spanning set in 
$\mathcal A^N_{\mathbf q}$; the homomorphism $\phi$ sends  
$X_0^{k_0}\dots X_N^{k_N}$ to a non-zero scalar multiple 
of $(x_0^{k_0}\dots x_N^{k_N})'$ in $ \left(\mathcal A^N\right)_{\mu_{\mathbf q}}$, where by definition 
these monomials form a basis. A spanning set carried by a linear map to 
a basis must itself be a basis, and so $\phi$ carries a basis to a basis hence is bijective.
\end{proof}

\begin{Remark}
It is well known that standard monomials form a basis of $\mathcal A_{\mathbf q}$. This can be shown, for example, by an easy application of the Diamond Lemma \cite{Bergman}. 
Proposition \ref{prop:twist} offers yet another way to formalize the proof of this fact.  
\end{Remark}

\subsection{Quantizations of the classical Segre map. The factorizable cocycle case}

From now on, we fix $n\ge 1$ and $m\ge 1$ and write 
\begin{itemize}
	\item $\mathcal A^n$ to denote the polynomial algebra 
	$\F[x_0,\dots,x_n]$, $\mathcal A^m$ to denote $\F[y_0,\dots,y_m]$, and 
	\item $\mathcal A^{(n+1)(m+1)-1}$ to denote 
	$\F[z_{00},\dots,z_{nm}]$.
\end{itemize}
Recall the morphism 
$$
f\colon \N^{(n+1)(m+1)}\to \N^{n+1}\times \N^{m+1}
$$ 
of monoids, defined in \eqref{eq:morphism-f}. 
These are grading monoids for $\mathcal A^{(n+1)(m+1)-1}$ and 
for $\mathcal A^n \otimes \mathcal A^m$, respectively.
The classical Segre map $s_{n,m}\colon \mathcal A^{(n+1)(m+1)-1} \to 
\mathcal A^n \otimes \mathcal A^m$ is compatible with $f$ (Lemma \ref{lem:compatible}), which allows us to make the following definition.
\begin{Definition}
Let $\mu$ be a cocycle 	in $Z^2(\N^{n+1}\times \N^{m+1}, \F^\times)$.
The \textbf{quantum Segre map} 
	\begin{equation*}
	\left(s_{n,m}\right)_{\mu}\colon 
	\left( \mathcal A^{(n+1)(m+1)-1}\right)_{\mu^f}\to 
	\left( \mathcal A^n \otimes \mathcal A^m\right)_{\mu}.
\end{equation*}
is the morphism of algebras given by Lemma \ref{general case of lemma};
that is, the unique morphism 
which is defined on generators $z_{ij}'$ of $\left( \mathcal A^{(n+1)(m+1)-1}\right)_{\mu^f}$ by  
 $s_{n,m}(z_{ij}')=(x_i\otimes y_j)'$.
\end{Definition} 
\noindent We finish by analyzing the case where the cocycle $\mu$ on $\N^{n+1}\times \N^{m+1}$ is factorizable. The next Proposition follows from our results so far.
\begin{Proposition}
Suppose that the cocycle $\mu$ on $\N^{n+1}\times \N^{m+1}$ is factorizable of the form $\nu\times \xi$, where 
$\nu$ is a cocycle on $\N^{n+1}$ and $\xi$ is a cocycle on $\N^{m+1}$. Let $\mathbf q$ and $\mathbf q'$ be multiplicatively antisymmetric matrices
which correspond to $\nu$ and $\xi$, respectively, via Theorem \ref{thm:antisym-matrices}. Then:
\begin{enumerate}
	\item the cocycle $\mu^f$ corresponds to the multiplicatively antisymmetric matrix $\mathbf g$ which is the Kronecker product of
	$\mathbf q$ and $\mathbf q'$;
	\item the twist $\left(\mathcal A^n \otimes \mathcal A^m\right)_\mu$ 
	is the classical tensor product $\mathcal A^n_{\mathbf q} \otimes 
	\mathcal A^m_{\mathbf q'}$ of two quantum projective spaces;
	\item the map $\left(s_{n,m}\right)_{\mu^f}$ coincides with the
	noncommutative Segre map constructed in \rm \cite{AGGI}. \qed
\end{enumerate}
\end{Proposition}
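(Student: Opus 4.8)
The plan is to prove the three assertions in order. At the outset I observe that all the objects involved—the twist of an algebra, the pullback of a cocycle, and the bijection of Theorem~\ref{thm:antisym-matrices}—depend only on cohomology classes: the antisymmetrization $\beta$ is constant on classes, while Lemma~\ref{lem:coboundary_isomorphism} shows the twisted algebras only change up to isomorphism. Moreover pulling back respects coboundaries, since if $\mu=(\delta h)(\nu\times\xi)$ then $\mu^f=\delta(h\circ f)\cdot(\nu\times\xi)^f$ because $f$ is a monoid morphism. Hence no generality is lost in computing with the representative $\mu=\nu\times\xi$.

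For part (1) I would identify the matrix attached to the class of $\mu^f$ via the inverse of the bijection in Theorem~\ref{thm:antisym-matrices}, which by Proposition~\ref{prop:as-pairings} is the antisymmetrization $\beta_{\mu^f}$ evaluated on the generators $e_{ij}$ of $\N^{(n+1)(m+1)}$. Using $f(e_{ij})=(\alpha_i,\beta_j)$ and the definition of the pullback, I compute
\[
\beta_{\mu^f}(e_{ij},e_{kl})
=\frac{\mu\big((\alpha_i,\beta_j),(\alpha_k,\beta_l)\big)}{\mu\big((\alpha_k,\beta_l),(\alpha_i,\beta_j)\big)}
=\frac{\nu(\alpha_i,\alpha_k)\,\xi(\beta_j,\beta_l)}{\nu(\alpha_k,\alpha_i)\,\xi(\beta_l,\beta_j)}
=\beta_\nu(\alpha_i,\alpha_k)\,\beta_\xi(\beta_j,\beta_l)
=q_{ik}\,q'_{jl},
\]
where the middle equality uses the product formula $(\nu\times\xi)((s,t),(s',t'))=\nu(s,s')\xi(t,t')$ and the last uses that $\mathbf q,\mathbf q'$ are the matrices of $\nu,\xi$. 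Since $(\mathbf q\otimes\mathbf q')_{(i,j),(k,l)}=q_{ik}q'_{jl}$ under the lexicographic indexing of index pairs, the matrix $\mathbf g$ attached to $\mu^f$ is exactly the Kronecker product $\mathbf q\otimes\mathbf q'$, which is automatically multiplicatively antisymmetric as the Kronecker product of two such matrices.

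Part (2) is then immediate from the previous section: because $\mu=\nu\times\xi$ is factorizable, the Remark recording that twists by factorizable cocycles are classical tensor products of twists identifies $(\mathcal A^n\otimes\mathcal A^m)_\mu$ with $(\mathcal A^n)_\nu\otimes(\mathcal A^m)_\xi$, and Proposition~\ref{prop:twist} identifies these factors with $\mathcal A^n_{\mathbf q}$ and $\mathcal A^m_{\mathbf q'}$ respectively. For part (3), writing $X_i,Y_j,Z_{ij}$ for the generators of $\mathcal A^n_{\mathbf q}$, $\mathcal A^m_{\mathbf q'}$, $\mathcal A^{(n+1)(m+1)-1}_{\mathbf g}$, I would trace these through the isomorphisms built in (1) and (2): under Proposition~\ref{prop:twist} the generator $Z_{ij}$ corresponds to $z_{ij}'$, while $X_i\otimes Y_j$ corresponds to $x_i'\otimes y_j'$, which is identified with $(x_i\otimes y_j)'$ inside $(\mathcal A^n\otimes\mathcal A^m)_\mu$. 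Since $(s_{n,m})_{\mu^f}$ sends $z_{ij}'\mapsto(x_i\otimes y_j)'$ by definition, the induced map on the quantum coordinate rings sends $Z_{ij}\mapsto X_i\otimes Y_j$, which is precisely the defining formula of the noncommutative Segre map of \cite{AGGI}.

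The cocycle manipulations are routine; the one place demanding care is the index bookkeeping in part (1), namely keeping the ordering conventions for the pairs $(i,j)$ consistent so that $q_{ik}q'_{jl}$ is genuinely the $\big((i,j),(k,l)\big)$ entry of $\mathbf q\otimes\mathbf q'$. The only genuinely external step is in part (3): confirming that the deformation matrix $\mathbf g$ produced by our construction agrees, under the same conventions, with the one used in \cite{AGGI}, so that the two maps coincide rather than merely being abstractly isomorphic.
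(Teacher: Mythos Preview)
Your proposal is correct and is precisely the argument the paper intends: the paper gives no explicit proof, stating only that the proposition ``follows from our results so far'' and closing with \qed, and you have faithfully unpacked this by invoking Proposition~\ref{prop:as-pairings}/Theorem~\ref{thm:antisym-matrices} for part~(1), the Remark on factorizable cocycles together with Proposition~\ref{prop:twist} for part~(2), and the definition of $(s_{n,m})_\mu$ for part~(3). Your antisymmetrization computation $\beta_{\mu^f}(e_{ij},e_{kl})=q_{ik}q'_{jl}$ is exactly the calculation the authors have in mind, and your closing caveat about matching index conventions with \cite{AGGI} is the only point not fully internal to the paper.
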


\printbibliography
	
\end{document}